\documentclass[11pt]{article}
\usepackage[T1]{fontenc}
\usepackage[margin=1in]{geometry}
\usepackage{amsmath,amssymb,amsthm,mathtools}
\usepackage{newtxtext}
\usepackage{newtxmath}
\usepackage{microtype}
\usepackage{enumitem}
\usepackage{booktabs}
\usepackage{authblk}
\usepackage[numbers,sort&compress]{natbib}
\usepackage[hidelinks]{hyperref}

\allowdisplaybreaks[2]
\setlist[enumerate]{leftmargin=2.3em,itemsep=0.2em,topsep=0.3em}
\newtheorem{theorem}{Theorem}[section]
\newtheorem{proposition}[theorem]{Proposition}
\newtheorem{lemma}[theorem]{Lemma}
\newtheorem{corollary}[theorem]{Corollary}
\theoremstyle{definition}
\newtheorem{definition}[theorem]{Definition}
\theoremstyle{remark}
\newtheorem{remark}[theorem]{Remark}
\newcommand{\R}{\mathbb{R}}
\newcommand{\norm}[1]{\left\lVert #1\right\rVert}
\newcommand{\trans}{\mathsf{T}}
\newcommand{\Span}{\operatorname{span}}
\newcommand{\op}{\mathrm{op}}
\DeclareMathOperator{\supp}{supp}

\makeatletter
\renewcommand\AB@authnote[1]{%
  \raisebox{0.75ex}{\scriptsize\normalfont #1}}
\renewcommand\AB@affilnote[1]{%
  \raisebox{0.75ex}{\scriptsize\normalfont #1}\,}
\makeatother
\title{The Minimum Q-Order of BFGS with Exact Line Search Is One}
\author[1]{Benqi Liu}
\author[2]{Chenyi Li}
\author[3]{Zaiwen Wen}
\affil[1]{Beijing International Center for Mathematical Research,
  Peking University, Beijing 100871, China;
  Email: \href{mailto:bqliu@pku.edu.cn}{\nolinkurl{bqliu@pku.edu.cn}}}
\affil[2]{School of Mathematical Sciences, Peking University,
  Beijing 100871, China;
  Email: \href{mailto:lichenyi@stu.pku.edu.cn}{%
    \nolinkurl{lichenyi@stu.pku.edu.cn}}}
\affil[3]{Beijing International Center for Mathematical Research,
  Peking University, Beijing 100871, China;
  Email: \href{mailto:wenzw@pku.edu.cn}{\nolinkurl{wenzw@pku.edu.cn}}}
\date{}
\begin{document}
\maketitle
\begin{abstract}
Powell asked whether the smoothness assumptions underlying classical
superlinear convergence force a fixed power law between adjacent iterates of
exact-line-search variable-metric methods. We answer this question negatively
for BFGS: within the smooth strongly convex setting, the smallest possible
adjacent-iterate Q-order is one, and this boundary is attained by a single
nonterminating run. In every finite dimension at least two, and for any
prescribed radius and Hessian tolerance, we construct an infinitely
differentiable, globally strongly convex objective that equals the standard
quadratic outside the corresponding ball and whose Hessian remains within the
prescribed tolerance of the identity in operator norm. The objective has its
unique minimizer at the origin and identity Hessian there. Exact-line-search
BFGS, initialized with the identity matrix and started inside that ball,
converges Q-superlinearly, yet no fixed power greater than one controls all
sufficiently late adjacent errors.
\end{abstract}
\noindent\textbf{Keywords.}
BFGS; exact line search; Q-order; superlinear convergence; strongly convex
optimization; counterexample.

\smallskip
\noindent\textbf{Mathematics Subject Classification.}
90C53; 65K05; 49M15.
\section{Introduction}
The Broyden--Fletcher--Goldfarb--Shanno (BFGS) method, introduced independently
by Broyden \cite{Broyden1970}, Fletcher \cite{Fletcher1970}, Goldfarb
\cite{Goldfarb1970}, and Shanno \cite{Shanno1970}, is a central quasi-Newton
method for continuous optimization. Its practical appeal comes from combining
curvature information with first-order oracle calls. Classical local analyses
by Powell \cite{Powell1971}, Broyden, Dennis, and Mor\'e
\cite{BroydenDennisMore1973}, Dennis and Mor\'e \cite{DennisMore1974}, and
Nocedal \cite{Nocedal1992} establish superlinear convergence near a
nondegenerate minimizer under appropriate Hessian regularity. A sharper
question remains: do unlimited smoothness and excellent conditioning force
the error of every sufficiently late iterate to be a fixed power of the
preceding error?

This question distinguishes two rate notions that are often grouped under the
term superlinear convergence. Q-superlinear convergence requires only
$$
  \frac{\norm{x_{k+1}-x_\star}}{\norm{x_k-x_\star}}\longrightarrow 0,
$$
whereas adjacent-iterate Q-order $p>1$ requires a constant $C>0$ such that
\begin{equation}\label{eq:intro-qorder}
  \norm{x_{k+1}-x_\star}\le C\norm{x_k-x_\star}^{p}
\end{equation}
for all sufficiently large $k$. Unlike an $n$-step product estimate or an
R-order bound based on a majorizing sequence, \eqref{eq:intro-qorder} controls
every late adjacent pair. It is therefore the natural power-law guarantee at
the single-iteration scale; see Powell \cite{Powell1983}, Schuller
\cite{Schuller1974}, and Ritter \cite{Ritter1980} for the corresponding
classical rate results.

Powell \cite[p.~1538]{Powell1983} asked for the least Q-order of
exact-line-search variable-metric methods near a nondegenerate minimizer when
the objective is infinitely differentiable and uniformly convex. Yuan
\cite{Yuan1984} proved that the infimum is one for uniformly convex $C^2$
objectives and constructed examples of Q-order $1+1/N$, with $N$ arbitrary.
Those examples approach but do not attain the boundary. Taking $N$ to
infinity changes the constructed objective and trajectory, so it does not
produce one run on which every exponent $p>1$ fails. Yuan also asked whether
the class associated with his particular fixed trajectory contains a function
differentiable to all orders. That trajectory-specific realization question
is distinct from the broader rate question studied here. To the best of our
knowledge, no previous construction attains Q-order one on a single
nonterminating trajectory in the $C^\infty$ strongly convex class. Dai's
recent overview \cite{Dai2022Overview} likewise records the sharp classical
conclusion as an infimum-one result.

We close this broader attainment gap. The minimum Q-order is one, and a single
nonterminating BFGS run attains it. The example is not caused by poor
conditioning or irregular behavior away from the solution. Its objective is
globally $C^\infty$ and strongly convex, its Hessian is uniformly as close to
the identity as desired, and it agrees with the standard quadratic outside an
arbitrarily small ball. The run also starts from the standard Hessian
approximation $B_0=I_n$. Thus no adjacent-iterate power order greater than one
can be deduced from the classical exact-line-search assumptions, even after
they are strengthened by near-unit global conditioning and a quadratic tail.

\subsection{Contributions}
The paper makes three contributions. First, it replaces Yuan's limiting family
of orders $1+1/N$ by one fixed problem and one fixed nonterminating trajectory
whose Q-order is exactly one. This proves that the universal lower bound is a
minimum, not only an infimum. Second, the result is robust: it holds in every
dimension $n\ge2$, uses $B_0=I_n$, and persists when the objective is
arbitrarily close to a quadratic in uniform Hessian norm. The nonquadratic
part and the initial point can both be confined to an arbitrarily small ball;
in particular, the Hessian is globally Lipschitz and all derivatives of order
at least three are bounded. Third, the proof develops an identity-initialized
two-dimensional realization criterion and combines it with an alternating
gradient construction. Flat interpolation and convex conjugacy turn the
discrete trajectory into a globally smooth strongly convex objective without
changing its adjacent-iterate order.

For the precise statement, write
$$
Q(x):=\frac12\norm{x}^2 \qquad (x\in\R^n)
$$
for the standard quadratic on $\R^n$.
Throughout, $\norm{\cdot}$ denotes the Euclidean norm on $\R^n$ and
$\norm{\cdot}_{\op}$ the induced operator norm. We write $I_n$ for the
$n\times n$ identity matrix, $B(x,r)$ for the open Euclidean ball centered at
$x$ with radius $r$, and $\overline{B}(x,r)$ for its closure. For a function
$f$, $\supp f$ denotes its support, and $C_c^\infty(\R^n)$ denotes the space
of smooth compactly supported functions. For symmetric matrices, $A\preceq B$
denotes the Loewner order.
\begin{theorem}[Main result]\label{thm:main}
For every finite dimension $n\ge2$, every $\varepsilon>0$, and every $R>0$,
there exist a function $F:\R^n\to\R$ and a point $x_0\in\R^n$ with the
following properties:
\begin{enumerate}[label=\textnormal{(\roman*)}]
  \item $F\in C^\infty(\R^n)$ is globally strongly convex,
  $$
  F-Q\in C_c^\infty(\R^n),\qquad
    \supp(F-Q)\cup\{x_0\}\subset B(0,R),
  $$
  and
  \begin{equation*}
    \sup_{x\in\R^n}\norm{\nabla^2F(x)-I_n}_{\op}<\varepsilon.
  \end{equation*}
  Its unique minimizer is $x_\star=0$, and $\nabla^2F(0)=I_n$.
  \item BFGS initialized with $B_0=I_n$ and using exact line search is well
  defined. Every Hessian approximation $B_k$ remains positive definite, the
  method does not terminate finitely, and its iterates converge to $0$.
  \item The iterates converge Q-superlinearly,
  $$
  \frac{\norm{x_{k+1}}}{\norm{x_k}}\longrightarrow0,
  $$
  while its Q-order at the origin, as defined in
  Definition~\ref{def:qorder}, is one.
\end{enumerate}
Consequently, among nonterminating exact-line-search BFGS sequences that
converge to a nondegenerate minimizer of a $C^\infty$ strongly convex
objective, the smallest possible adjacent-iterate Q-order is one. The
construction attains this value.
\end{theorem}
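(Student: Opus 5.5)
The plan is to reverse-engineer the objective from a prescribed trajectory. I first fix the trajectory, then build a smooth strongly convex function whose exact-line-search BFGS run reproduces it. The work is done in $n=2$, which is then embedded in $\R^n$ by adding $\frac12\norm{z}^2$ in the remaining coordinates. The extra coordinates stay at zero, since $x_0$ lies in the plane and $B_0=I_n$. The BFGS updates act as the identity on the orthogonal complement, so the run stays planar.

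\textbf{Step 1: a realization criterion in two dimensions.} Suppose we are given points $x_k$ and gradients $g_k$ with $g_{k+1}\perp s_k:=x_{k+1}-x_k$, which is exact line search. Define $B_k$ by the BFGS recursion from $B_0=I_2$. The criterion to verify is $s_k=-t_kB_k^{-1}g_k$ with $t_k>0$.

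In two dimensions this can be made explicit. Because $g_{k+1}\perp s_k$, the secant relation and the BFGS formula give $B_{k+1}^{-1}g_{k+1}\parallel g_{k+1}$ on the line orthogonal to $s_k$. Hence the next search direction is $-g_{k+1}$ up to a positive scaling, as for a quadratic with exact line search. So a sequence realizes the run iff the steps are alternately orthogonal: $s_{k+1}\parallel -g_{k+1}$ and $g_{k+1}\perp s_k$. Curvature conditions $y_k^\trans s_k>0$ keep every $B_k$ positive definite.

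\textbf{Step 2: the alternating gradient construction.} Let $x_k$ alternate along two orthogonal axes near the origin, with gradients $g_k\approx x_k$ slightly perturbed to meet the orthogonality. Choose error ratios $r_k=\norm{x_{k+1}}/\norm{x_k}$ with $r_k\to0$, but so slowly that
$$
\frac{\log\norm{x_{k+1}}}{\log\norm{x_k}}\to1 .
$$
For example, take $\norm{x_k}=\exp(-k\log k)$. Then $\norm{x_{k+1}}/\norm{x_k}^p\to\infty$ for every $p>1$, so no bound of Q-order $p>1$ holds (Definition~\ref{def:qorder}). The data must also satisfy the interpolation compatibility
$$
\norm{g_k-x_k}\ \text{small relative to}\ \norm{x_k}\cdot\delta_k,\qquad \delta_k\to0 .
$$
This is needed so that $\nabla^2F(0)=I$ and $\norm{\nabla^2F-I}_{\op}<\varepsilon$ are possible.

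\textbf{Step 3: interpolation.} Build $F=Q+\sum_k \phi_k$. Each $\phi_k$ is a bump supported in a small ball around $x_k$, and the balls are disjoint since the points are geometrically separated. Each bump is chosen so that $\nabla\phi_k(x_k)=g_k-x_k$. With bump radius $\rho_k\sim c\norm{x_k}$, the $j$-th derivatives are of size about $\norm{g_k-x_k}\rho_k^{-j+1}$. Using flat (super-polynomially small) perturbations $\norm{g_k-x_k}\le \norm{x_k}^{m}$ for every $m$ eventually, all derivatives of the sum converge at $0$ and vanish there. This gives $C^\infty$, $\nabla^2F(0)=I$, and a uniformly small Hessian perturbation, hence strong convexity and $\supp(F-Q)\subset B(0,R)$ after scaling.

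Exact line search requires more than $\nabla F(x_{k+1})\perp s_k$: $x_{k+1}$ must be the global minimizer of $F$ on the ray. Strong convexity along the line makes the stationary point the unique minimizer, so this follows. The points must also avoid the other bumps along the segment, or their effect must already be accounted for. Convex conjugacy may offer a cleaner route: prescribe the gradient map's inverse, which is near the identity. I will use it only if the bump accounting fails.

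\textbf{Main obstacle.} The hard part is making the flatness requirement compatible with Step 1. The geometry requires the gradients to deviate from $x_k$ just enough to force the alternating directions and the prescribed step lengths. That deviation may be forced to be comparable to $\norm{x_{k+1}}$, which is not super-polynomially small relative to $\norm{x_k}$.

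My first attempt will put all the needed freedom into the step length along an exact coordinate direction. If $g_k$ is parallel to $x_k$ on an axis, the line search from $x_k$ along $-g_k$ lands at $0$, so it would terminate finitely. The axes must therefore be slightly tilted. The tilt angles $\theta_k$ control $r_k\approx\theta_k$, so I will take $\theta_k\to0$ slowly, e.g. $\theta_k=1/k$, and realize them by perturbations $\norm{g_k-x_k}\approx\theta_k\norm{x_k}$.

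That size is not flat, so it conflicts with the derivative estimates in Step 3. The conflict is resolved by letting the bump radius scale with $\norm{x_k}$ and the amplitude with $\theta_k\norm{x_k}^2$. This gives Hessian perturbations of size $O(\theta_k)\to0$, which suffices for $\nabla^2F(0)=I$ and continuity of the Hessian.

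Higher derivatives, of size $\theta_k\norm{x_k}^{2-j}$, blow up near $0$, though, so full $C^\infty$ at the origin fails. If so, I will fall back on a flat-interpolation argument: choose $\theta_k$ decaying faster than any power of $\norm{x_k}^{-1}$ would permit. That is inconsistent with slowly decaying ratios, so I will handle the issue by verifying smoothness via the conjugate function instead. I expect this reconciliation to be the crux of the proof.
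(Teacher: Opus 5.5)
Your Step 1 criterion is false. It is the criterion for steepest descent, and no nonterminating BFGS run satisfies it. With exact line search, $B_{k+1}^{-1}y_k=s_k$ gives $y_k^\trans B_{k+1}^{-1}g_{k+1}=s_k^\trans g_{k+1}=0$. So the next direction is orthogonal to $y_k$ (conjugacy), not parallel to $g_{k+1}$, and the two agree only when $y_k\parallel s_k$. Already at $k=0$, $s_0\parallel g_0$ and $g_1\perp s_0$ give $y_0^\trans g_1=\norm{g_1}^2$, so $d_1\parallel -g_1$ forces $g_1=0$. The correct condition is the one in Lemma~\ref{lem:identity-realization}: $s_0=-\beta_0g_0$ and $g_{k+2}\in\Span\{y_k\}\setminus\{0\}$ (equivalently $s_{k+1}\perp y_k$), plus the sign argument given there. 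This is why the recurrence \eqref{eq:parameterized-recurrence} places $g_{k+1}$ exactly along $g_k-g_{k-1}$.

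Second, the step you defer is the real content of the theorem, and your fallbacks cannot work. In your scheme the ratio $\norm{x_{k+1}}/\norm{x_k}\approx\theta_k$ comes from a relative perturbation $\norm{g_k-x_k}/\norm{x_k}\approx\theta_k$ at the same step. But every admissible $F$ has a locally Lipschitz Hessian with $\nabla F(0)=0$ and $\nabla^2F(0)=I_n$, so $\norm{\nabla F(x)-x}=O(\norm{x}^2)$. With $\theta_k=1/k$ and $\norm{x_k}=e^{-k\log k}$ this fails, so no interpolant, primal or conjugate, realizes your data. Conjugacy cannot add smoothness anyway, since $\nabla F=(\nabla H)^{-1}$ transfers it both ways.

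The missing idea is a one-step lag combined with alternation. With $r_k=\norm{g_k}$ as in the paper, \eqref{eq:ratio-identity} gives $|P_{k+1}|\le|\delta_k|r_{k+1}/r_k$, so $r_{k+2}/r_{k+1}\le|\delta_k|/r_k+|\delta_{k+1}|/r_{k+1}$. The contraction ratio is therefore controlled by the relative perturbation at the earlier, larger scale. Set $L_k:=-\log r_k$. Flatness then gives, for each fixed $m$ and all large $k$, $L_{k+2}-L_{k+1}\ge mL_k-\log2$. Hence $L_{k+1}/L_k\to1$ cannot hold along all $k$, but Q-order one only needs it along a subsequence.

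Lemma~\ref{lem:alternating} exploits exactly this. At even $k=2j$ it makes $r_{k+1}=|\tau_k|$ astronomically small by near-cancellation, with $|\log|\tau_k||\ge j\bigl(1+|\log(A_k/r_k)|\bigr)$. The forced $\delta_k=(\tau_k-P_k)/D_k$ is flat by the invariant \eqref{eq:induction-invariant}. At the next odd step it retains the leading term, so $r_{k+2}\asymp r_{k+1}|\delta_k|/r_k$ (using $|D_k|\ge D_{\min}$). This gives adjacent ratio $\to0$ but log-ratio $\to1$. The lemma also chooses $\delta_{k+1}$ tiny to restore the invariant.

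A smaller issue: extending by $F_2(x)+\frac12\norm{z}^2$ makes $\supp(F-Q)$ an unbounded cylinder when $n\ge3$. The bumps must be $n$-dimensional and centered in the plane.
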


As a direct consequence, Dixon's exact-line-search equivalence
\cite{Dixon1972Identical} transfers the same iterate sequence, and hence the
same Q-order conclusion, to the convex Broyden class; see
Corollary~\ref{cor:broyden}.

The theorem separates two statements that are sometimes conflated. The BFGS
run is Q-superlinear, so every adjacent error ratio tends to zero. Nevertheless,
no single exponent $p>1$ controls all sufficiently late adjacent errors. The
result concerns exact line search and makes no claim about BFGS with Wolfe or
other inexact line searches. It also does not contradict $n$-step estimates,
R-order bounds, or nonasymptotic function-value rates, which control products,
majorizing sequences, or aggregate progress rather than an adjacent-iterate
power law.

\subsection{Proof strategy}
The proof has two parts: a discrete construction of the desired rate and a
smooth realization of that construction. We first construct a sequence
$\{g_k\}_{k\ge0}$ and candidate BFGS iterates $x_k=g_k+\Delta_k$, where
$\Delta_k\perp g_k$. On a fixed two-dimensional subspace, we design a
parameterized recurrence that enforces both the exact-line-search
orthogonality condition and a BFGS realization criterion. At even indices we
enforce near cancellation; at the subsequent odd indices we retain the
leading term. Every adjacent radius ratio then tends to zero, but the ratio of
successive negative log-radii tends to one along the retention transitions.

The perturbations $\Delta_k$ are flat to all orders and uniformly small
relative to $\norm{g_k}$. We interpolate the prescribed values by pairwise
disjoint $C^\infty$ bump functions. The resulting Hessian perturbation can be
made uniformly as small as desired. Convex conjugacy then yields the required
objective. A final scaling places both the nonquadratic perturbation and the
starting point in an arbitrarily small ball without changing the BFGS matrices
or the Q-order.
\subsection{Relation to prior work}
Prescribed BFGS trajectories and related interpolation arguments appear in
the nonconvex examples of Mascarenhas \cite{Mascarenhas2004,Mascarenhas2007}
and Dai \cite{Dai2013}. The realization criterion used below is adapted from
the convex converse criterion of Lewis and Zhang
\cite[Proposition~3.1]{LewisZhang2015}. Strong convexity in our setting yields
uniqueness, while the extra span condition in
Lemma~\ref{lem:identity-realization} fixes the standard initialization
$B_0=I_n$. Smooth convex interpolation and convex conjugacy are used in a
different context by Bolte and Pauwels \cite{BoltePauwels2022}.

Yuan's construction \cite{Yuan1984} established the infimum-one result in the
$C^2$ class. Its examples had Q-orders $1+1/N$, and its realized objective was
$C^2$. We do not decide whether Yuan's particular fixed trajectory admits a
$C^\infty$ realization. Instead, we use a different scale sequence to attain
Q-order one on a globally $C^\infty$ objective. The construction also fixes
the standard initialization and global conditioning through a flat, compactly
supported smooth realization.

Table~\ref{tab:boundary-comparison} isolates the distinction that matters for
the present paper. An infimum over a family does not provide a member at the
boundary, whereas our theorem produces one fixed problem and one fixed run on
which every power estimate with $p>1$ fails.
\begin{table}[t]
\centering
\caption{From the classical infimum result to boundary attainment.}
\label{tab:boundary-comparison}
\begin{tabular}{@{}p{0.21\textwidth}p{0.34\textwidth}p{0.37\textwidth}@{}}
\toprule
Property & Yuan \cite{Yuan1984} & Theorem~\ref{thm:main} \\
\midrule
Boundary statement
  & The infimum of possible Q-orders is one.
  & The minimum Q-order is one and is attained. \\
Constructed rate
  & Q-order $1+1/N$ for each fixed $N$.
  & Q-order exactly one on a single nonterminating run. \\
Smooth realization
  & A uniformly convex $C^2$ objective; all-order differentiability for the
    specified trajectory was left open.
  & A globally $C^\infty$ strongly convex objective. \\
Global control
  & Uniform convexity.
  & Hessian arbitrarily close to $I_n$ and an exact quadratic tail. \\
\bottomrule
\end{tabular}
\par\smallskip
\begin{minipage}{0.94\textwidth}
\small\emph{Note.}
The comparison concerns established properties of the respective
constructions; it does not assert that Yuan's fixed trajectory has no smoother
realization.
\end{minipage}
\end{table}

Local nonasymptotic analyses of unit-step BFGS were given by Rodomanov and
Nesterov \cite{RodomanovNesterov2022} and Jin and Mokhtari
\cite{JinMokhtari2023}. Jin, Jiang, and Mokhtari
\cite{JinJiangMokhtari2025} more recently obtained global nonasymptotic
function-value bounds for BFGS with exact line search. Those results quantify
aggregate progress and are not adjacent-iterate power estimates. The present
counterexample does not weaken them. Instead, it shows that such aggregate
bounds cannot in general be converted into a uniform Q-order $p>1$, even when
the objective is arbitrarily close to a quadratic.
\paragraph{Organization.}
Section~\ref{sec:setting} defines the notion of Q-order used here and proves
the identity-initialized BFGS realization criterion. Section~\ref{sec:discrete}
constructs the alternating gradient sequence. Section~\ref{sec:smooth}
realizes this sequence by a compactly supported smooth perturbation and records
the effects of convex conjugacy and scaling. Section~\ref{sec:proof} then
proves the main theorem. Section~\ref{sec:conclusion} concludes the paper.
\section{Q-order and identity-initialized BFGS realization}
\label{sec:setting}
We use the Hessian form of BFGS. Let $F:\R^n\to\R$ be differentiable and,
for an iterate $x_k$, set
$$
g_k:=\nabla F(x_k).
$$
Given a symmetric positive definite Hessian approximation $B_k\succ0$, the
search direction $d_k$ is defined by
$$
B_k d_k=-g_k.
$$
An exact line search chooses
\begin{equation*}
  \alpha_k\in\arg\min_{\alpha\ge0}F(x_k+\alpha d_k),
  \qquad
  x_{k+1}=x_k+\alpha_k d_k.
\end{equation*}
With
$$
s_k:=x_{k+1}-x_k,
  \qquad
  y_k:=g_{k+1}-g_k,
$$
the BFGS update is
\begin{equation}\label{eq:bfgs-update}
  B_{k+1}
  =B_k-\frac{B_k s_k s_k^\trans B_k}{s_k^\trans B_k s_k}
       +\frac{y_k y_k^\trans}{s_k^\trans y_k}.
\end{equation}
Whenever $s_k^\trans y_k>0$, the update is well defined and preserves positive
definiteness.

\begin{definition}[Q-order]\label{def:qorder}
Let $\{x_k\}_{k\ge0}$ converge to $x_\star$, and assume $x_k\ne x_\star$ for all
sufficiently large $k$. Following the upper-bound convention of
Jay \cite{Jay2001}, the sequence has \emph{Q-order at least $p$}, where $p\ge1$,
if there exist constants $C>0$ and $k_0$ such that
\begin{equation}\label{eq:qorder-definition}
  \norm{x_{k+1}-x_\star}
  \le C\norm{x_k-x_\star}^{p}
  \qquad(k\ge k_0).
\end{equation}
Equivalently,
$$
\limsup_{k\to\infty}
  \frac{\norm{x_{k+1}-x_\star}}
       {\norm{x_k-x_\star}^{p}}<\infty.
$$
Whenever the set of admissible exponents is nonempty, the \emph{Q-order} of
$\{x_k\}_{k\ge0}$ at $x_\star$ is defined as
\begin{equation*}
  \sup\Bigl\{p\ge1:\ \{x_k\}_{k\ge0}\text{ has Q-order at least }p
  \text{ at }x_\star\Bigr\}.
\end{equation*}
The sequence converges \emph{Q-superlinearly} to $x_\star$ if
\begin{equation*}
  \frac{\norm{x_{k+1}-x_\star}}
       {\norm{x_k-x_\star}}\longrightarrow0.
\end{equation*}
Thus, for a fixed $p>1$, the estimate \eqref{eq:qorder-definition} must hold
for all sufficiently large $k$, with a constant $C$ independent of $k$.
\end{definition}
\begin{proposition}[Universal lower bound]\label{prop:lower-bound}
Let $F:\R^n\to\R$ be $C^2$ in a neighborhood of $x_\star$, with
$\nabla F(x_\star)=0$ and $\nabla^2F(x_\star)\succ0$. Let
$\{x_k\}_{k\ge0}\subset\R^n$ satisfy $x_k\to x_\star$ and
$x_k\ne x_\star$ for all sufficiently large $k$. Suppose that, for every
$k\ge0$, there exist $d_k\in\R^n$ and $\alpha_k\ge0$ such that
$$
  \alpha_k\in\arg\min_{\alpha\ge0}F(x_k+\alpha d_k),
  \qquad
  x_{k+1}=x_k+\alpha_kd_k.
$$
Then $\{x_k\}_{k\ge0}$ has Q-order at least one at $x_\star$.
\end{proposition}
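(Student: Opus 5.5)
We show that $\norm{x_{k+1}-x_\star}\le C\norm{x_k-x_\star}$ for all large $k$. The point is that exact line search along any ray never increases $F$, since $\alpha=0$ is admissible. Near a nondegenerate minimizer, function values and squared distances to $x_\star$ are comparable, so a monotone decrease of $F$ gives a linear bound on the distances.

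Set $H:=\nabla^2F(x_\star)\succ0$ and let $0<\lambda\le\Lambda$ be its extreme eigenvalues. Because $F$ is $C^2$ near $x_\star$ and $\nabla F(x_\star)=0$, Taylor's theorem with continuity of $\nabla^2F$ gives a radius $r>0$ with
$$
\tfrac{\lambda}{4}\norm{x-x_\star}^2\le F(x)-F(x_\star)\le \Lambda\norm{x-x_\star}^2
\qquad(x\in B(x_\star,r)).
$$
To get this, I will choose $r$ so that $\norm{\nabla^2F(z)-H}_{\op}\le\lambda/2$ on the ball, and then use the integral form of the remainder.

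Since $x_k\to x_\star$, there is $k_0$ with $x_k\in B(x_\star,r)$ for all $k\ge k_0$. Minimality of $\alpha_k$ over $\alpha\ge0$ gives $F(x_{k+1})\le F(x_k+0\cdot d_k)=F(x_k)$. For $k\ge k_0$ both $x_k$ and $x_{k+1}$ lie in the ball, so
$$
\tfrac{\lambda}{4}\norm{x_{k+1}-x_\star}^2\le F(x_{k+1})-F(x_\star)\le F(x_k)-F(x_\star)\le\Lambda\norm{x_k-x_\star}^2 .
$$
Hence $\norm{x_{k+1}-x_\star}\le 2\sqrt{\Lambda/\lambda}\,\norm{x_k-x_\star}$. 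This is \eqref{eq:qorder-definition} with $p=1$ and $C=2\sqrt{\Lambda/\lambda}$, so the sequence has Q-order at least one.

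No step is a real obstacle. The only care needed is to apply the local quadratic bounds only at indices where both consecutive iterates lie in the ball, which convergence of the whole sequence guarantees. We never need to locate the segment between $x_k$ and $x_{k+1}$, because the argument uses only the values of $F$ at the two iterates.
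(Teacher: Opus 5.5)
Your proof is correct and matches the paper's argument: two-sided local quadratic bounds on $F-F(x_\star)$ near the nondegenerate minimizer, combined with $F(x_{k+1})\le F(x_k)$ (since $\alpha=0$ is admissible), give the linear bound. The only difference is that you derive explicit constants from the eigenvalues of $\nabla^2F(x_\star)$, giving $C=2\sqrt{\Lambda/\lambda}$, whereas the paper uses generic constants $m,M$ and obtains $C=\sqrt{M/m}$.
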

\begin{proof}
By continuity and positive definiteness of the Hessian, there are a
neighborhood $U$ of $x_\star$ and constants $0<m\le M$ such that
$$
\frac m2\norm{x-x_\star}^2
  \le F(x)-F(x_\star)
  \le \frac M2\norm{x-x_\star}^2
  \qquad(x\in U).
$$
For all sufficiently large $k$, both $x_k$ and $x_{k+1}$ lie in $U$.
Since $\alpha=0$ is feasible in the line search,
$F(x_{k+1})\le F(x_k)$, and hence
$$
\norm{x_{k+1}-x_\star}
  \le \sqrt{M/m}\,\norm{x_k-x_\star}
$$
for every sufficiently large $k$. Hence \eqref{eq:qorder-definition} holds
with $p=1$ and $C=\sqrt{M/m}$.
\end{proof}
\begin{lemma}[Identity-initialized realization on a two-dimensional subspace]
\label{lem:identity-realization}
Let $V\subset\R^n$ be a two-dimensional linear subspace, and let
$F:\R^n\to\R$ be differentiable and strongly convex. Consider a sequence of
distinct points $\{x_k\}_{k\ge0}\subset V$, and define
$$
g_k:=\nabla F(x_k),\qquad
  s_k:=x_{k+1}-x_k,\qquad
  y_k:=g_{k+1}-g_k
  \qquad(k\ge0).
$$
Assume $g_k\in V$ for every $k\ge0$ and
\begin{subequations}
\renewcommand{\theequation}{\theparentequation.\arabic{equation}}
\begin{align}
  g_{k+1}^\trans s_k&=0
  \qquad(k\ge0),\label{eq:realization-orthogonality}\\
  g_{k+2}&\in\Span\{y_k\}\setminus\{0\}
  \qquad(k\ge0).\label{eq:realization-span}
\end{align}
\end{subequations}
Assume in addition that
\begin{equation*}
  s_0=-\beta_0g_0
  \qquad\text{for some }\beta_0>0.
\end{equation*}
Then BFGS with $B_0=I_n$ and exact line search generates exactly
$\{x_k\}_{k\ge0}$.
Every $B_k$ is positive definite and, with respect to the orthogonal
decomposition $\R^n=V\oplus V^\perp$, has the block form
$$
B_k=B_k^V\oplus I_{V^\perp},
$$
where $B_k^V$ denotes the restriction of $B_k$ to $V$.
\end{lemma}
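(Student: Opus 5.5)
We argue by induction on $k$ that the BFGS iterates coincide with the given $\{x_k\}$, that each $B_k$ is positive definite with block form $B_k^V\oplus I_{V^\perp}$, and that the search direction satisfies $d_k\in\Span\{s_k\}$ with a positive multiple producing $s_k$. The core mechanism is the Lewis--Zhang converse criterion: under exact line search and the span condition \eqref{eq:realization-span}, the BFGS direction at step $k+1$ must be parallel to $s_{k+1}$. Strong convexity makes the exact line search minimizer along any nonzero direction unique. It is therefore characterized by the orthogonality condition \eqref{eq:realization-orthogonality}, provided the direction is a descent direction.

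\emph{Base step.} Since $B_0=I_n$, we have $d_0=-g_0$. The restriction of $\alpha\mapsto F(x_0-\alpha g_0)$ is strongly convex, and $g_0\neq0$ because $s_0\ne0$ (the points are distinct). Since $s_0=-\beta_0 g_0$ with $\beta_0>0$ and $g_1^\trans s_0=0$, the point $\alpha=\beta_0$ is the unique stationary point, hence the unique minimizer over $\alpha\ge0$. Thus $x_1$ is reproduced, and the block form of $B_0$ is trivial.

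\emph{Inductive step.} Assume $B_k\succ0$ has the block form and BFGS reproduces $x_{k+1}$. Then $s_k^\trans y_k=-g_k^\trans s_k=\alpha_k g_k^\trans B_k^{-1}g_k>0$ by exact line search and descent, so $B_{k+1}\succ0$. Because $s_k,y_k\in V$ and $B_ks_k\in V$ by the block form, the update \eqref{eq:bfgs-update} only modifies the $V$-block. For the direction, use the identity $B_{k+1}s_k=y_k$ together with the standard computation: for $w\perp B_ks_k$ the update gives $B_{k+1}w=B_kw+\frac{y_k^\trans w}{s_k^\trans y_k}y_k$. Since $B_k s_k=\alpha_kB_kd_k=-\alpha_k g_k$, the relevant orthogonality is $w\perp g_k$.

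Rather than this route, I would argue the direction directly. By hypothesis $g_{k+2}=\lambda y_k$ with $\lambda\ne0$. Take $d_{k+1}=-B_{k+1}^{-1}g_{k+1}$; we want $d_{k+1}\parallel s_{k+1}$. In the two-dimensional $V$-block, $B_{k+1}^V$ is determined by its action on $s_k$ (sending it to $y_k$) and on a vector orthogonal to $g_k$. By \eqref{eq:realization-orthogonality} at index $k$, $g_{k+1}\perp s_k$, and since $s_k\parallel B_k^{-1}g_k$ this gives $g_{k+1}^\trans B_k^{-1}g_k=0$. The cleanest check is the inverse BFGS formula $H_{k+1}=(I-\rho s y^\trans)H_k(I-\rho y s^\trans)+\rho ss^\trans$ with $H=B^{-1}$ and $\rho=1/s_k^\trans y_k$. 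Using $s_k^\trans g_{k+1}=0$, this yields
\[
H_{k+1}g_{k+1}=(I-\rho s_ky_k^\trans)H_kg_{k+1}.
\]

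This is where the main obstacle lies: showing that $H_{k+1}g_{k+1}$ is parallel to $s_{k+1}$. In $V$ (dimension two), $s_{k+1}$ is characterized up to scale by $g_{k+2}^\trans s_{k+1}=0$, i.e.\ $y_k^\trans s_{k+1}=0$. So it suffices to show $y_k^\trans H_{k+1}g_{k+1}=0$, and this follows since $y_k^\trans H_{k+1}=s_k^\trans$ (the secant equation for $H$) and $s_k^\trans g_{k+1}=0$. Also $H_{k+1}g_{k+1}\in V$ by the block form and $g_{k+1}\in V$. Because $V$ is two-dimensional, the vector $H_{k+1}g_{k+1}$ is a nonzero multiple of $s_{k+1}$, where nonzero follows from $g_{k+1}\ne0$ since $x_{k+1}\ne x_{k+2}$ and the minimizer is unique. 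The sign is then fixed by descent: $g_{k+1}^\trans d_{k+1}<0$. Since $x_{k+2}$ is the unique minimizer along the line (orthogonality $g_{k+2}^\trans s_{k+1}=0$ and strong convexity), $s_{k+1}$ is also a descent direction from $x_{k+1}$, because $F(x_{k+2})<F(x_{k+1})$. Hence $s_{k+1}=\alpha_{k+1}d_{k+1}$ with $\alpha_{k+1}>0$ the exact step, which closes the induction.
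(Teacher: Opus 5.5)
Your proof is correct and is essentially the paper's argument, written for $B_{k+1}^{-1}$ instead of $B_{k+1}$. The paper uses the secant relation to get $s_k^\trans B_{k+1}s_{k+1}=y_k^\trans s_{k+1}=0$, hence $B_{k+1}s_{k+1}\parallel g_{k+1}$ on the line $V\cap s_k^\perp$. You instead get $y_k^\trans B_{k+1}^{-1}g_{k+1}=s_k^\trans g_{k+1}=0$, hence $B_{k+1}^{-1}g_{k+1}\parallel s_{k+1}$ on the line $V\cap y_k^\perp$; this is a line because the span hypothesis forces $y_k\ne0$. Both versions fix the sign by the same descent/strong-convexity argument.

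The intermediate detours play no role and can be deleted: the formula for $B_{k+1}w$ with $w\perp B_ks_k$, and the inverse-update expression for $B_{k+1}^{-1}g_{k+1}$.
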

\begin{proof}
At $k=0$,
$B_0s_0=s_0=-\beta_0g_0$ and
$B_0=I_V\oplus I_{V^\perp}$. Suppose inductively that
\begin{equation*}
  B_ks_k=-\beta_kg_k,
  \qquad \beta_k>0,
  \qquad B_k=B_k^V\oplus I_{V^\perp}.
\end{equation*}
The corresponding BFGS search direction satisfies
$$
d_k=-B_k^{-1}g_k=\frac1{\beta_k}s_k.
$$
Define the line-search function $\phi_k(\alpha):=F(x_k+\alpha d_k)$. Since
$s_k=\beta_kd_k$, we have $x_{k+1}=x_k+\beta_kd_k$. Equation
\eqref{eq:realization-orthogonality} then gives
$$
\phi_k'(\beta_k)
  =\nabla F(x_k+\beta_kd_k)^\trans d_k
  =g_{k+1}^\trans d_k
  =\frac{1}{\beta_k}g_{k+1}^\trans s_k
  =0.
$$
Since the points $x_k$ are distinct, $d_k\ne0$, and the strong convexity of
$F$ implies that $\phi_k$ is strictly convex. Thus $\beta_k$ is the unique
minimizer of $\phi_k$ on $\R$. Since $\beta_k>0$, it is also the unique
minimizer over $\alpha\ge0$, and exact line search gives
$\alpha_k=\beta_k$.
Strong convexity gives $s_k^\trans y_k>0$. Hence the BFGS update is well
defined, preserves positive definiteness, and satisfies the secant relation
$B_{k+1}s_k=y_k$. Since $s_k,y_k\in V$ and $B_k$ has the stated block form,
both rank-one corrections in \eqref{eq:bfgs-update} act only on $V$. Hence
\begin{equation*}
  B_{k+1}=B_{k+1}^V\oplus I_{V^\perp}.
\end{equation*}
Combining \eqref{eq:realization-span} with
\eqref{eq:realization-orthogonality} at index $k+1$ gives
$y_k^\trans s_{k+1}=0$. Hence
$$
s_k^\trans B_{k+1}s_{k+1}
  =y_k^\trans s_{k+1}=0.
$$
Both $B_{k+1}s_{k+1}$ and $g_{k+1}$ lie in the one-dimensional space
$V\cap s_k^\perp$ and hence are collinear. Since the points in the sequence
are distinct, $s_{k+1}\ne0$. Together with $B_{k+1}\succ0$, this gives
$$
s_{k+1}^\trans B_{k+1}s_{k+1}>0.
$$
Moreover, strong convexity of $F$ and
\eqref{eq:realization-orthogonality} at index $k+1$ give
$$
0<s_{k+1}^\trans(g_{k+2}-g_{k+1})
   =-g_{k+1}^\trans s_{k+1}.
$$
These two inequalities, together with collinearity, imply
$$
B_{k+1}s_{k+1}=-\beta_{k+1}g_{k+1}
  \qquad\text{for some }\beta_{k+1}>0.
$$
This closes the induction.
\end{proof}
\section{An alternating gradient construction compatible with BFGS}
\label{sec:discrete}
We first carry out the construction in $\R^2$. Let
$$
J:=\begin{pmatrix}0&-1\\1&0\end{pmatrix},
$$
which is the counterclockwise rotation by $\pi/2$. For each nonzero vector
$g_k$, define
\begin{equation}\label{eq:tangent-data}
  r_k:=\norm{g_k}>0,\qquad
  e_k:=\frac{g_k}{r_k},\qquad
  q_k:=Je_k,\qquad
  \Delta_k:=\delta_kq_k,
\end{equation}
where $\delta_k\in\R$ is a scalar parameter to be specified later in the
construction. Thus $\Delta_k$ is a tangential perturbation of $g_k$, with its
direction and magnitude determined by $\delta_k$. The pair $(e_k,q_k)$ is an
orthonormal basis. Consequently, $g_k^\trans\Delta_k=0$ for every choice of
$\delta_k$.
For each $k\ge0$, define the candidate BFGS iterate by
\begin{equation*}
  x_k:=g_k+\Delta_k.
\end{equation*}
The smooth realization in Section~\ref{sec:smooth} will ensure that
$\nabla F(x_k)=g_k$.
For $k\ge1$, whenever $g_k\ne g_{k-1}$, define
\begin{equation*}
  v_k:=g_k-g_{k-1},\qquad
  u_k:=\frac{v_k}{\norm{v_k}},\qquad
  P_k:=u_k^\trans g_k,\qquad
  D_k:=u_k^\trans q_k,
\end{equation*}
and
\begin{equation*}
  S_k:=\bigl|\det(e_{k-1},e_k)\bigr|\in[0,1].
\end{equation*}
Once $\delta_k$ has been chosen, define
\begin{equation}\label{eq:parameterized-recurrence}
  \tau_k:=P_k+D_k\delta_k,
  \qquad
  g_{k+1}:=\tau_ku_k.
\end{equation}
Whenever $g_{k+1}\ne0$, the next perturbation has the same form
$\Delta_{k+1}=\delta_{k+1}q_{k+1}$, with $\delta_{k+1}$ to be chosen at the
next step.
\begin{lemma}[Pre-step and post-step identities]
\label{lem:recurrence-identities}
Fix $k\ge1$ and suppose that $g_{k-1}$ and $g_k$ are distinct and nonzero.
Assume the pre-step condition
\begin{equation}\label{eq:prestep-relation}
  v_k^\trans g_k=g_k^\trans\Delta_{k-1}.
\end{equation}
Set $\omega_k:=\det(e_{k-1},e_k)$. Then
\begin{equation*}
  P_k=\frac{\delta_{k-1}r_k\omega_k}{\norm{v_k}},
  \qquad
  D_k=\frac{r_{k-1}\omega_k}{\norm{v_k}}.
\end{equation*}
Consequently, if $S_k>0$, then $D_k\ne0$ and
\begin{equation}\label{eq:ratio-identity}
  \left|\frac{P_k}{D_k}\right|
  =\frac{|\delta_{k-1}|r_k}{r_{k-1}}.
\end{equation}
Suppose further that $\delta_k$ is chosen so that $\tau_k\ne0$, and define
$g_{k+1}$ by \eqref{eq:parameterized-recurrence}. For any tangential choice
$\Delta_{k+1}\perp g_{k+1}$, set $x_{k+1}:=g_{k+1}+\Delta_{k+1}$. Then
\begin{equation*}
  g_{k+1}^\trans(x_{k+1}-x_k)=0,
  \qquad
  g_{k+1}\in\Span\{g_k-g_{k-1}\}\setminus\{0\}.
\end{equation*}
\begin{equation}\label{eq:angular-identities}
  S_{k+1}=\frac{r_{k-1}S_k}{\norm{g_k-g_{k-1}}},
  \qquad
  |D_k|=S_{k+1}.
\end{equation}
\end{lemma}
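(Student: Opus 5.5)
The plan is a direct computation in the orthonormal frame $(e_k,q_k)$. Everything rests on one planar identity for $J$: for $a,b\in\R^2$,
$$
b^\trans Ja=\det(a,b),
$$
which is checked componentwise. In particular $e_k^\trans q_{k-1}=e_k^\trans Je_{k-1}=\omega_k$ and $e_{k-1}^\trans q_k=\det(e_k,e_{k-1})=-\omega_k$, while $e_k^\trans q_k=0$. I will also use $v_k=r_ke_k-r_{k-1}e_{k-1}$, which is nonzero because $g_{k-1}\ne g_k$, so $u_k$ is defined.

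\emph{Formulas for $P_k$ and $D_k$.} For $P_k$ I use the pre-step condition \eqref{eq:prestep-relation}:
$$
P_k=\frac{v_k^\trans g_k}{\norm{v_k}}=\frac{g_k^\trans\Delta_{k-1}}{\norm{v_k}}=\frac{r_k\delta_{k-1}\,e_k^\trans q_{k-1}}{\norm{v_k}}=\frac{\delta_{k-1}r_k\omega_k}{\norm{v_k}}.
$$
For $D_k$, the $e_k$-component of $v_k$ is annihilated by $q_k$, so
$$
D_k=\frac{-r_{k-1}e_{k-1}^\trans q_k}{\norm{v_k}}=\frac{r_{k-1}\omega_k}{\norm{v_k}}.
$$
If $S_k=|\omega_k|>0$, then $D_k\ne0$ because $r_{k-1}>0$. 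Dividing the two formulas gives \eqref{eq:ratio-identity}.

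\emph{Post-step identities.} Assume $\tau_k\ne0$ and $\Delta_{k+1}\perp g_{k+1}$.
\begin{itemize}
\item Since $\Delta_{k+1}\perp g_{k+1}$, we have $g_{k+1}^\trans x_{k+1}=\norm{g_{k+1}}^2=\tau_k^2$.
\item On the other side,
$$
g_{k+1}^\trans x_k=\tau_k u_k^\trans(g_k+\delta_kq_k)=\tau_k(P_k+D_k\delta_k)=\tau_k^2.
$$
Subtracting gives $g_{k+1}^\trans(x_{k+1}-x_k)=0$.
\item The span statement is immediate from $g_{k+1}=\tau_ku_k$ with $\tau_k\ne0$.
\item For the angular identity, $e_{k+1}=\pm u_k$, so
$$
S_{k+1}=\frac{|\det(e_k,v_k)|}{\norm{v_k}}=\frac{r_{k-1}|\det(e_k,e_{k-1})|}{\norm{v_k}}=\frac{r_{k-1}S_k}{\norm{v_k}}.
$$
Comparing with the formula for $D_k$ gives $|D_k|=S_{k+1}$.
\end{itemize}

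There is no real obstacle here. The only points needing care are sign bookkeeping in the determinant identity for $J$, and remembering that the pre-step condition enters only through $P_k$, while $D_k$ and the angular identity hold without it.
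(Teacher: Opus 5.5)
Your proposal is correct and follows the paper's proof essentially step for step: $P_k$ from the pre-step condition, $D_k$ by direct computation, orthogonality from $\tau_k=P_k+D_k\delta_k$, and the angular identity from $e_{k+1}=\operatorname{sgn}(\tau_k)u_k$. The only addition is that you make explicit the planar identity $b^\trans Ja=\det(a,b)$ behind the paper's ``direct calculation,'' and your sign bookkeeping with it is correct.
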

\begin{proof}
By \eqref{eq:prestep-relation},
$$
P_k=\frac{v_k^\trans g_k}{\norm{v_k}}
  =\frac{g_k^\trans\Delta_{k-1}}{\norm{v_k}}
  =\frac{\delta_{k-1}r_k\omega_k}{\norm{v_k}}.
$$
Direct calculation also gives
$$
D_k=\frac{v_k^\trans q_k}{\norm{v_k}}
  =\frac{r_{k-1}\omega_k}{\norm{v_k}}.
$$
Since $S_k=|\omega_k|$, the claims concerning $D_k$ and
\eqref{eq:ratio-identity} follow.

Now suppose that $\tau_k\ne0$. Using
$g_{k+1}^\trans\Delta_{k+1}=0$ and
\eqref{eq:parameterized-recurrence},
\begin{align*}
  g_{k+1}^\trans(x_{k+1}-x_k)
  &=\tau_ku_k^\trans
    \bigl(\tau_ku_k+\Delta_{k+1}-g_k-\delta_kq_k\bigr)\\
  &=\tau_k(\tau_k-P_k-D_k\delta_k)=0.
\end{align*}
Since $\tau_k\ne0$ and $u_k=v_k/\norm{v_k}$,
\eqref{eq:parameterized-recurrence} yields
$$
g_{k+1}=\frac{\tau_k}{\norm{v_k}}v_k
  \in\Span\{g_k-g_{k-1}\}\setminus\{0\},
$$
which proves the span relation. Moreover,
$$
e_{k+1}=\operatorname{sgn}(\tau_k)\frac{v_k}{\norm{v_k}}.
$$
Consequently,
\begin{align*}
  S_{k+1}
  &=\bigl|\det(e_k,e_{k+1})\bigr|
    =\frac{\bigl|\det(e_k,v_k)\bigr|}{\norm{v_k}}
    =\frac{r_{k-1}\bigl|\det(e_{k-1},e_k)\bigr|}{\norm{v_k}}
    =\frac{r_{k-1}S_k}{\norm{g_k-g_{k-1}}},\\
  |D_k|
  &=\frac{r_{k-1}|\omega_k|}{\norm{v_k}}
    =\frac{r_{k-1}S_k}{\norm{g_k-g_{k-1}}}
    =S_{k+1}.
\end{align*}
These are precisely the two identities in
\eqref{eq:angular-identities}.
\end{proof}
The post-step identities \eqref{eq:angular-identities} also show that the
recurrence remains nondegenerate. Along any nonzero recurrence, $S_1>0$
implies $S_k>0$ for every $k$. Moreover, with
$\mu_k:=r_k/r_{k-1}$,
\begin{equation}\label{eq:angle-product}
  S_{k+1}
  =\frac{S_k}{\norm{e_{k-1}-\mu_ke_k}}
  \ge\frac{S_k}{1+\mu_k}.
\end{equation}
Consequently, if $\sum_{k\ge1}\mu_k<\infty$, then iteration of
\eqref{eq:angle-product} and the inequality $\log(1+t)\le t$ give
$$
  S_{k+1}
  \ge S_1\prod_{\ell=1}^k(1+\mu_\ell)^{-1}
  \ge S_1\exp\left(-\sum_{\ell=1}^{\infty}\mu_\ell\right)
  =:D_{\min}>0.
$$
Since $|D_k|=S_{k+1}$ by \eqref{eq:angular-identities}, both $S_k$ and
$|D_k|$ are uniformly bounded away from zero.
\begin{lemma}[Alternating scale construction]\label{lem:alternating}
For every $\sigma\in(0,1)$, there exist nonzero vectors
$\{g_k\}_{k\ge0}$, scalars $\{\delta_k\}_{k\ge0}$, and constants $a,b>0$
satisfying
\eqref{eq:tangent-data}--\eqref{eq:parameterized-recurrence} such that
\begin{align}
  &r_k\downarrow0,
  \qquad
  \frac{r_{k+1}}{r_k}\longrightarrow0,
  \qquad
  \sum_{k=0}^{\infty}\frac{r_{k+1}}{r_k}<\infty,
  \notag\\
  &|\delta_k|=o(r_k^m)
  \qquad\text{for every fixed integer }m\ge0.
  \label{eq:all-order-flatness}
\end{align}
Moreover, the odd transitions have asymptotic logarithmic order one:
\begin{equation}\label{eq:log-ratio-subsequence}
  \frac{-\log r_{2j+2}}{-\log r_{2j+1}}\longrightarrow1
  \qquad(j\to\infty).
\end{equation}
In addition, the following uniform bounds hold:
\begin{equation}\label{eq:uniform-smallness}
  \frac{r_{k+1}}{r_k}\le\frac14\quad(k\ge0),
  \qquad
  \sup_{k\ge0}\frac{|\delta_k|}{r_k}\le\sigma.
\end{equation}
The initial values are given by
\begin{equation*}
  g_0=(a,0),\qquad
  g_1=(0,b),\qquad
  \delta_0=b,\qquad
  0<|\delta_1|<\frac a2.
\end{equation*}
\end{lemma}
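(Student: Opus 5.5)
The plan is to build the sequence inductively in $\R^2$, choosing $\delta_k$ at step $k$ from data already fixed. The pre-step condition \eqref{eq:prestep-relation} is then always available, so Lemma~\ref{lem:recurrence-identities} controls each step.

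\emph{Initialization.} Take $g_0=(a,0)$, $g_1=(0,b)$ and $\delta_0=b$, with $0<b\le\min\{\sigma,1/4\}\,a$. Then $q_0=(0,1)$ and $\Delta_0=(0,b)$. Also $v_1=(-a,b)$, so $v_1^\trans g_1=b^2=g_1^\trans\Delta_0$, which is \eqref{eq:prestep-relation} at $k=1$, and $S_1=1$. For $k\ge1$, once $g_{k+1}$ has been defined by \eqref{eq:parameterized-recurrence} with $\tau_k\ne0$, the post-step identity $g_{k+1}^\trans(x_{k+1}-x_k)=0$ of Lemma~\ref{lem:recurrence-identities} reads
$$
(g_{k+1}-g_k)^\trans g_{k+1}=g_{k+1}^\trans\Delta_k .
$$
This is exactly \eqref{eq:prestep-relation} at $k+1$, so the induction propagates. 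At every step, Lemma~\ref{lem:recurrence-identities} gives
$$
|D_k|=S_{k+1}\le1,\qquad \Bigl|\frac{P_k}{D_k}\Bigr|=\frac{|\delta_{k-1}|r_k}{r_{k-1}},\qquad r_{k+1}=|P_k+D_k\delta_k| .
$$
We will impose $\mu_\ell=r_\ell/r_{\ell-1}\le4^{-\ell}$ throughout (with $\mu_1=b/a\le1/4$). Then \eqref{eq:angle-product} gives $|D_k|\ge D_{\min}:=e^{-1/3}$, a bound independent of later choices, which avoids any circularity.

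\emph{Odd steps $k=2j+1$ (retention).} Here $\delta_k$ is chosen so tiny that $r_{k+1}$ is essentially $|P_k|$. Concretely, require $|\delta_k|\le\frac12|P_k|$ and also $|\delta_k|\le\exp(-1/|P_k|)$. Since $|D_k|\le1$, this gives $r_{k+1}\in[\tfrac12|P_k|,\tfrac32|P_k|]$, so $r_{k+1}$ is known up to a factor before $\delta_k$ is fixed. Hence $\delta_k$ is flat to all orders in $r_{k+1}$, and a fortiori in $r_k$. Moreover $r_{k+1}\asymp |\delta_{k-1}|\,r_k/r_{k-1}$ up to constants depending only on $D_{\min}$. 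This yields
$$
-\log r_{2j+2}=-\log r_{2j+1}-\log|\delta_{2j}|+\log r_{2j}+O(1).
$$
Thus \eqref{eq:log-ratio-subsequence} follows once $-\log r_{2j+1}$ dominates $-\log|\delta_{2j}|$. That dominance is arranged at the even step.

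\emph{Even steps $k=2j\ge2$ (cancellation).} Set $\delta_k=(-P_k+t_k)/D_k$, so that $r_{k+1}=|t_k|$. Choose $t_k\ne0$ so small that $-\log|t_k|\ge j\,(-\log|P_k/D_k|)$, and also small enough for the ratio bounds below. Since $|t_k|\ll|P_k|$, we get $|\delta_k|\asymp|\delta_{k-1}|r_k/r_{k-1}$. By the odd-step choice $|\delta_{k-1}|\le\exp(-1/(2r_k))$ roughly, so $\delta_k$ is flat in $r_k$. This proves \eqref{eq:all-order-flatness}, and shrinking $a$-scale constants initially gives $|\delta_k|/r_k\le\sigma$.

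\emph{Rates.} The ratios $r_{2j+1}/r_{2j}$ are made as small as desired by the choice of $t_{2j}$. The ratios $r_{2j+2}/r_{2j+1}\asymp|\delta_{2j}|/r_{2j}$ tend to $0$ by flatness. Shrinking further where needed gives $\mu_k\le\min\{1/4,4^{-k}\}$, so $\sum_k r_{k+1}/r_k<\infty$ and $r_k\downarrow0$. At $k=1$ the same retention rule applies, with $|\delta_1|<a/2$ automatic from $|\delta_1|\le\tfrac12|P_1|\le\tfrac12\delta_0 r_1/r_0=b^2/(2a)$.

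The main obstacle is the bookkeeping of flatness across the two interleaved steps. The flatness of $\delta_{2j}$ relative to $r_{2j}$ is inherited from $\delta_{2j-1}$. That in turn must be flat relative to the \emph{next} radius $r_{2j}$, which is only determined by $P_{2j-1}$. The a priori two-sided estimate $r_{2j}\asymp|P_{2j-1}|$, which rests on the uniform lower bound $D_{\min}$, is what makes this consistent. I would verify that estimate first.
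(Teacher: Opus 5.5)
Your proposal is correct and follows essentially the paper's route: cancellation at even indices via $\delta_k=(\tau_k-P_k)/D_k$, with $|\log|\tau_k||$ dominating the previous logarithmic scale by a factor $j$; retention at odd indices, with $r_{k+1}\in[\tfrac12|P_k|,\tfrac32|P_k|]$; propagation of the pre-step relation; and $D_{\min}$ from the angle-product bound. Your one variation, enforcing flatness by $|\delta_k|\le\exp(-1/|P_k|)$ at retention steps instead of the paper's invariant $A_k\le r_k^{j+3}$, is valid and lighter. One bookkeeping fix: $r_2/r_1\to b/\sqrt{a^2+b^2}$ is set by $b/a$ alone, so $b\le a/4$ does not give your budget $\mu_2\le4^{-2}$ and you need a smaller $b/a$; likewise, for $j\ge1$ the bound $r_{2j+2}/r_{2j+1}\le\tfrac94|\delta_{2j-1}|/r_{2j-1}$ is fixed at the odd step $2j-1$, which is where the extra shrinking must be imposed.
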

The proof is an induction over pairs of steps. At an even index $k=2j$, the
past data determine the cancellation scale $A_k=|P_k/D_k|$. We first choose
the next radius $r_{k+1}=|\tau_k|$ and then solve
$\delta_k=(\tau_k-P_k)/D_k$; this makes the new radius arbitrarily small. At
the following odd index, the past data determine the nonzero leading term
$T_k=|P_{k+1}|$. We choose $\delta_{k+1}$ too small to cancel this term, so
$r_{k+2}$ lies between fixed positive multiples of $T_k$. The summable
sequence $\{\eta_\ell\}$ budgets the two adjacent radius ratios, while $c_j$
makes the bound on $A_k$ propagate from $k$ to $k+2$. Table~\ref{tab:even-odd}
summarizes the two choices; no later choice changes data fixed at an earlier
step.
\begin{table}[!ht]
\centering
\caption{Even--odd choice order in the two-step induction.}
\label{tab:even-odd}
\begin{tabular}{@{}p{0.17\textwidth}p{0.42\textwidth}p{0.33\textwidth}@{}}
\toprule
Step & Choice & Consequence \\
\midrule
Even $k=2j$
  & Choose nonzero $\tau_k$ small and set
    $\delta_k=(\tau_k-P_k)/D_k$.
  & $r_{k+1}=|\tau_k|$ can be made arbitrarily small. \\
Odd $k+1$
  & Choose $\delta_{k+1}$ so that
    $|D_{k+1}\delta_{k+1}|\le T_k/2$.
  & $\tfrac12T_k\le r_{k+2}\le\tfrac32T_k$. \\
\bottomrule
\end{tabular}
\par\smallskip
\begin{minipage}{0.94\textwidth}
\small\emph{Note.}
At each row, the quantities preceding the choice are fixed by earlier steps.
\end{minipage}
\end{table}
\begin{proof}
Choose $\vartheta\in(0,1)$ sufficiently small that
$\tfrac23\vartheta\cdot2^{-5}\le\sigma$, and set
$\eta_\ell:=\vartheta\cdot2^{-\ell-4}$ for $\ell\ge1$. Choose $a\in(0,1)$ with
$a^2\le\sigma$, and then choose $b>0$ such that
$$
\frac ba\le\min\{\sigma,1/8\}.
$$
Set $g_0=(a,0)$, $g_1=(0,b)$, and $\delta_0=b$. Then $S_1=1$, and the
initial pre-step relation
$$
v_1^\trans g_1=g_1^\trans\Delta_0
$$
holds. We first record how this relation propagates. Suppose that the
hypotheses of Lemma~\ref{lem:recurrence-identities} hold at an index $\ell$
and that $\tau_\ell\ne0$. That lemma gives
$$
S_{\ell+1}
  =\frac{r_{\ell-1}S_\ell}{\norm{g_\ell-g_{\ell-1}}},
  \qquad
  g_{\ell+1}^\trans(x_{\ell+1}-x_\ell)=0.
$$
Since
$x_{\ell+1}-x_\ell=v_{\ell+1}+\Delta_{\ell+1}-\Delta_\ell$ and
$g_{\ell+1}^\trans\Delta_{\ell+1}=0$, the second equality becomes
$$
v_{\ell+1}^\trans g_{\ell+1}
  =g_{\ell+1}^\trans\Delta_\ell,
$$
which is the pre-step condition at index $\ell+1$. In particular, if
$S_\ell>0$, then $S_{\ell+1}>0$ and the pre-step condition propagates to the
next index.
Let
$\kappa:=(a^2+b^2)^{1/2}$. At $k=1$,
$$
P_1=\frac{b^2}{\kappa},\qquad
  D_1=\frac a\kappa,\qquad
  r_2=\frac{|b^2+a\delta_1|}{\kappa}.
$$
For all sufficiently small $\delta_1$, we have $\tau_1\ne0$. Since $S_1=1$,
the preceding propagation argument at index $1$ gives $S_2>0$ and the
pre-step condition at index $2$. As $\delta_1\to0$ through nonzero values,
$r_2\to b^2/\kappa>0$, while \eqref{eq:ratio-identity} at $k=2$ gives
$$
A_2:=\left|\frac{P_2}{D_2}\right|
  =|\delta_1|\frac{r_2}{b}\longrightarrow0.
$$
Moreover, $r_2/r_1\to b/\kappa<1/4$ because $b/a\le1/8$ and
$\kappa\ge a$. Hence all the following requirements hold simultaneously for
every sufficiently small nonzero $\delta_1$.
Choose $\delta_1\ne0$ sufficiently small to satisfy
\begin{equation*}
  \frac{|\delta_1|}{b}\le\sigma,
  \qquad |\delta_1|<\frac a2,
  \qquad \frac{r_2}{r_1}\le\frac14,
\end{equation*}
and
$$
0<A_2\le\min\left\{r_2^4,\frac49\eta_3r_2\right\}.
$$
At every even index $k=2j\ge2$, with $j\ge1$, maintain
\begin{equation}\label{eq:induction-invariant}
  0<A_k:=\left|\frac{P_k}{D_k}\right|
  \le\min\left\{r_k^{j+3},\frac49\eta_{k+1}r_k\right\}.
\end{equation}
The last inequality is \eqref{eq:induction-invariant} at $k=2$ and $j=1$,
and hence establishes the base case. At an even index
$k=2j\ge2$, choose a nonzero signed number $\tau_k$ such that
\begin{equation}\label{eq:small-even-step}
  |\tau_k|\le
  \min\left\{\frac12|P_k|,r_k^{j+2},\eta_kr_k\right\}
\end{equation}
and
\begin{equation}\label{eq:log-choice}
  \frac{1+|\log(A_k/r_k)|}{|\log|\tau_k||}\le\frac1j.
\end{equation}
These conditions are compatible because $|\tau_k|$ can be chosen
arbitrarily small. Set
\begin{equation*}
  \delta_k=\frac{\tau_k-P_k}{D_k}.
\end{equation*}
Then $r_{k+1}=|\tau_k|$ and
\begin{equation}\label{eq:delta-comparison}
  \frac12A_k\le|\delta_k|\le\frac32A_k.
\end{equation}
Since $S_k>0$, the propagation argument above gives $S_{k+1}>0$ and the
pre-step condition at index $k+1$. At this next odd index, equation
\eqref{eq:ratio-identity} gives
$T_k:=|P_{k+1}|>0$. Choose $\delta_{k+1}\ne0$ sufficiently small to satisfy
\begin{equation}\label{eq:odd-perturbation}
  |D_{k+1}\delta_{k+1}|\le\frac12T_k,
  \qquad
  |\delta_{k+1}|\le r_{k+1}^{j+2},
  \qquad
  |\delta_{k+1}|\le c_jr_{k+1}T_k^{j+3},
\end{equation}
where
\begin{equation}\label{eq:cj-choice}
  0<c_j\le\min\left\{2^{-(j+3)},\frac49\eta_{k+3}\right\}.
\end{equation}
The right-hand sides in \eqref{eq:odd-perturbation} are positive, so a nonzero
$\delta_{k+1}$ satisfying all three bounds exists.
Set $\tau_{k+1}=P_{k+1}+D_{k+1}\delta_{k+1}$. The first inequality in
\eqref{eq:odd-perturbation} prevents cancellation of the leading term. Hence
\begin{equation}\label{eq:odd-two-sided}
  \frac12T_k\le r_{k+2}\le\frac32T_k.
\end{equation}
Since $S_{k+1}>0$, the propagation argument at index $k+1$ gives
$S_{k+2}>0$ and the pre-step condition at index $k+2$.
We now verify that the induction closes. By \eqref{eq:ratio-identity},
$$
T_k=|D_{k+1}|\,|\delta_k|\frac{r_{k+1}}{r_k}.
$$
Since $|D_{k+1}|\le1$, equations \eqref{eq:delta-comparison},
\eqref{eq:odd-two-sided}, and \eqref{eq:induction-invariant} imply
\begin{equation}\label{eq:odd-ratio-bound}
  \frac{r_{k+2}}{r_{k+1}}
  \le\frac94\frac{A_k}{r_k}
  \le\eta_{k+1}.
\end{equation}
At the next even index, \eqref{eq:ratio-identity} and
\eqref{eq:odd-perturbation} give
$$
A_{k+2}
  =|\delta_{k+1}|\frac{r_{k+2}}{r_{k+1}}
  \le c_jT_k^{j+3}r_{k+2}.
$$
Since $T_k\le2r_{k+2}$ and
$T_k=|u_{k+1}^\trans g_{k+1}|\le r_{k+1}<1$, the two bounds in
\eqref{eq:cj-choice} yield
$$
A_{k+2}
  \le\min\left\{r_{k+2}^{j+4},\frac49\eta_{k+3}r_{k+2}\right\},
$$
which is precisely \eqref{eq:induction-invariant} at the next even index.
At even indices, \eqref{eq:small-even-step} gives
$r_{k+1}/r_k\le\eta_k$, while at the following odd indices
\eqref{eq:odd-ratio-bound} gives $r_{k+2}/r_{k+1}\le\eta_{k+1}$.
The summability of $\{\eta_\ell\}_{\ell\ge1}$ implies the summability of the
adjacent ratios. By construction, all these ratios are at most $1/4$. Hence
\eqref{eq:angle-product} yields a constant
$D_{\min}>0$ such that
\begin{equation}\label{eq:D-uniform}
  0<D_{\min}\le|D_k|\le1\qquad(k\ge1).
\end{equation}
For $j\ge1$, the first bound in \eqref{eq:induction-invariant},
\eqref{eq:delta-comparison}, and the middle inequality in
\eqref{eq:odd-perturbation} give
$$
|\delta_{2j}|\le\frac32r_{2j}^{j+3},
  \qquad
  |\delta_{2j+1}|\le r_{2j+1}^{j+2}.
$$
For any fixed integer $m\ge0$, division by the corresponding powers of the
radii gives
$$
  \frac{|\delta_{2j}|}{r_{2j}^m}
  \le\frac32r_{2j}^{j+3-m}\longrightarrow0,
  \qquad
  \frac{|\delta_{2j+1}|}{r_{2j+1}^m}
  \le r_{2j+1}^{j+2-m}\longrightarrow0.
$$
Indeed, each exponent on the right is at least one for all sufficiently large
$j$, while $r_k\to0$. The even and odd subsequences therefore establish
\eqref{eq:all-order-flatness}. The prescribed $\sigma$ bound holds at $k=0,1$.
At an even index,
$$
\frac{|\delta_k|}{r_k}
  \le\frac23\eta_{k+1}\le\sigma.
$$
At the following odd index,
$$
\frac{|\delta_{k+1}|}{r_{k+1}}
  \le r_{k+1}^{j+1}\le a^2\le\sigma,
$$
where the penultimate inequality uses $j\ge1$ and $r_{k+1}\le r_0=a<1$.
This gives \eqref{eq:uniform-smallness}.
It remains to prove the logarithmic limit. For every even $k=2j\ge2$,
\eqref{eq:D-uniform}, \eqref{eq:ratio-identity}, and
\eqref{eq:odd-two-sided} give the uniform two-step estimate
\begin{equation*}
  r_{k+2}\asymp r_{k+1}\frac{|\delta_k|}{r_k}.
\end{equation*}
Here $a_k\asymp b_k$ means that $a_k/b_k$ is bounded above and below by
positive constants independent of $k$. Taking negative logarithms and using
\eqref{eq:delta-comparison},
$$
-\log r_{k+2}
  =-\log r_{k+1}-\log(|\delta_k|/r_k)+O(1),
$$
where the $O(1)$ term is uniform. By \eqref{eq:delta-comparison},
$\log(|\delta_k|/r_k)$ differs from $\log(A_k/r_k)$ by a uniformly bounded
quantity. Since $k=2j\ge2$ and $r_{k+1}=|\tau_k|<1$,
$$
\frac{-\log r_{k+2}}{-\log r_{k+1}}
  =1+
  \frac{-\log(A_k/r_k)+O(1)}{|\log|\tau_k||}.
$$
The uniform $O(1)$ term is bounded in absolute value by a constant $C>0$
independent of $j$. Hence \eqref{eq:log-choice} gives
$$
\left|
    \frac{-\log r_{k+2}}{-\log r_{k+1}}-1
  \right|
  \le
  \frac{|\log(A_k/r_k)|+C}{|\log|\tau_k||}
  \le \frac{\max\{1,C\}}{j}
  \longrightarrow0.
$$
This proves \eqref{eq:log-ratio-subsequence}.
\end{proof}
\begin{remark}
The construction alternates between cancellation and retention. A
near-cancellation step makes $r_{k+1}$ very small, while the next step retains
the leading term. Hence every adjacent ratio tends to zero, whereas along these
odd transitions the ratio of successive negative logarithms tends to one.
These two properties yield Q-superlinear convergence with Q-order one.
\end{remark}

\section{Compactly supported smooth realization and convex conjugacy}
\label{sec:smooth}
Fix $n\ge2$. Choose a linear isometric embedding
$\iota:\R^2\to\R^n$ and set $V:=\iota(\R^2)$. Apply $\iota$ to the sequences
constructed in Section~\ref{sec:discrete}. To simplify notation, we continue
to denote the images $\iota(g_k)$, $\iota(\Delta_k)$, and $\iota(x_k)$ by
$g_k$, $\Delta_k$, and $x_k$, respectively. Thus all these vectors lie in the
fixed two-dimensional subspace $V\subset\R^n$.
\begin{lemma}[Compactly supported interpolation]\label{lem:interpolation}
There exists a constant $C_n>0$, depending only on $n$ and on a fixed smooth
cutoff. For any sequences $\{g_k\}_{k\ge0}$, $\{\delta_k\}_{k\ge0}$, and
$\{\Delta_k\}_{k\ge0}$ constructed in Lemma~\ref{lem:alternating}, let
$x_k:=g_k+\Delta_k$. If \eqref{eq:uniform-smallness} holds, then there exists
a function $H\in C^\infty(\R^n)$ such that
\begin{align*}
  &H-Q\in C_c^\infty(\R^n),
  \qquad
  \supp(H-Q)\subset\overline{B}(0,5r_0/4),
  \\
  &\nabla H(g_k)=x_k\quad(k\ge0),
  \qquad
  \nabla H(0)=0,
  \qquad
  \nabla^2H(0)=I_n,
  \\
  &\sup_{z\in\R^n}\norm{\nabla^2H(z)-I_n}_{\op}
  \le C_n\sup_{k\ge0}\frac{|\delta_k|}{r_k}.
\end{align*}
\end{lemma}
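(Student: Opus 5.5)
We take $H := Q + \sum_{k\ge0}\phi_k$, where each $\phi_k$ is a bump supported in a small ball around $g_k$ with $\nabla\phi_k(g_k)=\Delta_k$. Since $\nabla Q(g_k)=g_k$, this gives $\nabla H(g_k)=g_k+\Delta_k=x_k$, provided the supports are pairwise disjoint and avoid $0$.

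\textbf{Local pieces.} Fix a cutoff $\chi\in C_c^\infty(\R^n)$ with $\chi\equiv1$ on $B(0,1/2)$ and $\supp\chi\subset B(0,1)$. Set $\rho_k:=r_k/4$ and
$$
\phi_k(z):=\chi\bigl((z-g_k)/\rho_k\bigr)\,\Delta_k^\trans(z-g_k).
$$
Then $\nabla\phi_k(g_k)=\Delta_k$, because $\chi\equiv1$ near the origin. The Hessian is computed by the chain rule:
$$
\nabla^2\phi_k=\rho_k^{-2}\nabla^2\chi(\cdot)\,\Delta_k^\trans(z-g_k)+\rho_k^{-1}\bigl(\nabla\chi\,\Delta_k^\trans+\Delta_k\nabla\chi^\trans\bigr).
$$
On the support we have $\norm{z-g_k}\le\rho_k$, so
$$
\norm{\nabla^2\phi_k}_{\op}\le C_\chi\,\norm{\Delta_k}/\rho_k=4C_\chi|\delta_k|/r_k .
$$
Here $C_\chi$ depends only on $\chi$, and hence on $n$ and the fixed cutoff. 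This is where the constant $C_n$ comes from.

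\textbf{Disjointness and support.} By \eqref{eq:uniform-smallness}, $r_{k+1}\le r_k/4$. For $j>k$,
$$
\norm{g_k-g_j}\ge r_k-r_j\ge\tfrac34 r_k>\rho_k+\rho_j,
$$
since $\rho_j\le r_k/16$. So the balls $B(g_k,\rho_k)$ are pairwise disjoint. Each ball also excludes the origin, because $\norm{g_k}=r_k>\rho_k$. All of them lie in $\overline B(0,5r_0/4)$, since $r_k+\rho_k\le\frac54 r_0$.

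At any point $z$, at most one $\phi_k$ is nonzero. Hence the Hessian bound above holds globally for $\nabla^2H-I_n$, with $C_n:=4C_\chi$.

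\textbf{Main obstacle: smoothness at the origin.} The balls accumulate at $0$, so we must show $\sum_k\phi_k$ is $C^\infty$ on all of $\R^n$ and flat at $0$. Away from $0$ the sum is locally finite, hence smooth. At $0$ we use the flatness \eqref{eq:all-order-flatness}. A multi-index derivative of order $m$ satisfies
$$
\norm{\partial^\alpha\phi_k}_\infty\le C_m|\delta_k|\rho_k^{1-m}=C_m'\,|\delta_k|\,r_k^{1-m},
$$
and points in $\supp\phi_k$ have norm comparable to $r_k$, namely between $\frac34 r_k$ and $\frac54 r_k$. Since $|\delta_k|=o(r_k^{N})$ for every $N$, every derivative of $\sum\phi_k$ is $o(\norm{z}^N)$ as $z\to0$, for every $N$. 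A standard induction then shows the sum is $C^\infty$ with all derivatives vanishing at $0$: each difference quotient at $0$ of an order-$m$ derivative is bounded by $o(\norm{z}^N)/\norm{z}\to0$. It follows that $\nabla H(0)=0$ and $\nabla^2H(0)=I_n$.

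Compact support of $H-Q$ is immediate from the support bound, which completes the lemma.
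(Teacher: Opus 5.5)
Your proposal is correct and follows essentially the same route as the paper: disjoint rescaled bumps $\psi((z-g_k)/\rho_k)\,\Delta_k^\trans(z-g_k)$, a chain-rule Hessian bound of order $|\delta_k|/\rho_k$, and the flatness $|\delta_k|=o(r_k^m)$ to obtain smoothness and vanishing derivatives at the origin. The only cosmetic difference is that you take $\rho_k=r_k/4$ directly, whereas the paper takes $\rho_k=\ell_k/4$ with $\ell_k$ the distance from $g_k$ to the rest of $\{0\}\cup\{g_j\}$. Both choices give $\rho_k\asymp r_k$ and the same disjointness and support conclusions.
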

\begin{proof}
Choose a smooth cutoff function $\psi\in C_c^\infty(\R^n)$ such that
$\supp\psi\subset B(0,1)$ and $\psi$ is identically one in a neighborhood of
the origin.
Let
$$
E=\{0\}\cup\{g_k: k\ge0\},
  \qquad
  \ell_k=\operatorname{dist}\bigl(g_k,E\setminus\{g_k\}\bigr).
$$
Since $r_{k+1}/r_k\le1/4$, radial separation gives
$$
\frac34r_k\le\ell_k\le r_k.
$$
Indeed, every later point has norm at most $r_{k+1}\le r_k/4$, so its distance
from $g_k$ is at least $3r_k/4$. For $k=0$, there are no earlier points. For
$k\ge1$, every earlier point has norm at least $r_{k-1}\ge4r_k$, so its
distance from $g_k$ is at least $3r_k$. The origin is at distance $r_k$ from
$g_k$, which also gives the upper bound on $\ell_k$.
Set $\rho_k=\ell_k/4$. Then
\begin{equation*}
  \frac{3}{16}r_k\le\rho_k\le\frac14r_k.
\end{equation*}
For $i\ne j$, the definition of $\ell_i$ and $\ell_j$ gives
$$
\rho_i+\rho_j
  \le\frac14\norm{g_i-g_j}+\frac14\norm{g_i-g_j}
  <\norm{g_i-g_j}.
$$
Hence the closed balls $\overline{B}(g_k,\rho_k)$ are pairwise disjoint.
Moreover, for every $z\in\overline{B}(g_k,\rho_k)$, the reverse triangle
inequality gives
$$
\norm{z}\ge\norm{g_k}-\norm{z-g_k}
  \ge r_k-\rho_k\ge\frac34r_k>0.
$$
Therefore, $0\notin\overline{B}(g_k,\rho_k)$ for every $k\ge0$.
Define
\begin{equation*}
  \phi_k(z)
  =\psi\left(\frac{z-g_k}{\rho_k}\right)
   \Delta_k^\trans(z-g_k),
  \qquad
  h(z)=\sum_{k=0}^{\infty}\phi_k(z),
\end{equation*}
with $h(0)=0$. Since
$\supp\phi_k\subset\overline{B}(g_k,\rho_k)$, these supports are pairwise
disjoint, and at each nonzero point at most one summand is nonzero. For every
integer $m\ge0$, the product and chain rules yield
\begin{equation}\label{eq:bump-derivative}
  \norm{D^m\phi_k}_{L^\infty}
  \le C_m|\delta_k|\rho_k^{1-m}
  \le C_m'|\delta_k|r_k^{1-m}.
\end{equation}
For $m\ge1$, $\norm{D^m f(z)}_{\op}$ denotes the operator norm of the
$m$-linear derivative. For $m=0$, set $\norm{D^0f(z)}:=|f(z)|$. In either
case, write
$$
\norm{D^m f}_{L^\infty}
  :=\sup_{z\in\R^n}\norm{D^m f(z)},
$$
where for $m\ge1$ the norm on the right is the operator norm just defined.
Fix integers $m,L\ge0$. Points in $\supp\phi_k$ have norm comparable to
$r_k$, because
$$
\frac34r_k\le\norm{z}\le\frac54r_k
  \qquad(z\in\supp\phi_k).
$$
Since $r_k\to0$, for every $\varepsilon>0$ only finitely many supports
$\supp\phi_k$ intersect the set $\{z:\norm{z}\ge\varepsilon\}$. Hence the
family $\{\supp\phi_k\}_{k\ge0}$ is locally finite on
$\R^n\setminus\{0\}$, and $h$ is smooth there.

Let $\{z_\nu\}_{\nu\ge1}\subset\R^n\setminus\{0\}$ satisfy $z_\nu\to0$.
For each $\nu$ such that
$z_\nu\notin\bigcup_{k\ge0}\supp\phi_k$, the ratio below is zero. For the
remaining indices, disjointness gives a unique $k_\nu$ such that
$z_\nu\in\supp\phi_{k_\nu}$. Along this subsequence, the preceding
comparison implies $r_{k_\nu}\to0$, and hence $k_\nu\to\infty$. Using
\eqref{eq:bump-derivative} and the lower bound on $\norm{z_\nu}$ gives
$$
\frac{\norm{D^m h(z_\nu)}}{\norm{z_\nu}^L}
  \le C_{m,L}|\delta_{k_\nu}|r_{k_\nu}^{1-m-L}
  \longrightarrow0,
$$
where the limit follows from
$|\delta_k|=o(r_k^{L+m})$ in \eqref{eq:all-order-flatness}. Therefore,
$$
\norm{D^m h(z)}=o(\norm{z}^L)
  \qquad(z\to0),
$$
for every fixed pair of integers $m,L\ge0$. For $m\ge0$, define
$\mathcal D_m(z):=D^m h(z)$ for $z\ne0$ and $\mathcal D_m(0):=0$. Taking
$L=0$ shows that $\mathcal D_m$ is continuous at
the origin. If $D^m h=\mathcal D_m$, then the estimate with $L=1$ shows that
$\mathcal D_m$ is differentiable at the origin with derivative
$0=\mathcal D_{m+1}(0)$. On $\R^n\setminus\{0\}$, termwise differentiation
of the locally finite sum gives $D\mathcal D_m=\mathcal D_{m+1}$. Induction
yields
$h\in C^\infty(\R^n)$, $D^m h=\mathcal D_m$, and $D^m h(0)=0$ for every $m$.
Since $\psi$ is constant near zero and the supports are disjoint,
$$
\nabla h(g_k)=\Delta_k.
$$
All supports lie in $\overline{B}(0,5r_0/4)$. Hence
$h\in C_c^\infty(\R^n)$.
Taking $m=2$ in \eqref{eq:bump-derivative} and using disjointness gives
$$
\sup_{z\in\R^n}\norm{\nabla^2h(z)}_{\op}
  \le C_n\sup_{k\ge0}\frac{|\delta_k|}{r_k}.
$$
Since $x_k=g_k+\Delta_k$, setting $H=Q+h$ proves the lemma.
\end{proof}
\begin{lemma}[Conjugacy preserves the quadratic tail]\label{lem:conjugacy}
Let $H\in C^\infty(\R^n)$ satisfy
$$
H-Q\in C_c^\infty(\R^n),
  \qquad
  \sup_{z\in\R^n}\norm{\nabla^2H(z)-I_n}_{\op}\le\theta<1,
$$
with $\nabla H(0)=0$ and $\nabla^2H(0)=I_n$. Let $F:=H^*$ be the convex
conjugate of $H$:
$$
H^*(x):=\sup_{z\in\R^n}\bigl\{x^\trans z-H(z)\bigr\}.
$$
Then $\nabla H$ is a global $C^\infty$ diffeomorphism,
$F\in C^\infty(\R^n)$, and
\begin{align}
  &\nabla F=(\nabla H)^{-1},
  \qquad
  \nabla^2F(x)
  =\bigl[\nabla^2H((\nabla H)^{-1}(x))\bigr]^{-1},
  \label{eq:legendre-hessian}\\
  &\frac1{1+\theta}I_n
  \preceq\nabla^2F(x)
  \preceq\frac1{1-\theta}I_n,
  \label{eq:F-hessian-bounds}\\
  &\sup_{x\in\R^n}\norm{\nabla^2F(x)-I_n}_{\op}
  \le\frac{\theta}{1-\theta},
  \label{eq:F-hessian-close}\\
  &F-Q\in C_c^\infty(\R^n),
  \qquad
  \supp(F-Q)\subset\supp(H-Q).
  \label{eq:F-compact-support}
\end{align}
The origin is the unique minimizer of $F$, and $\nabla^2F(0)=I_n$.
\end{lemma}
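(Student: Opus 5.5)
We need to prove Lemma~\ref{lem:conjugacy}. The plan is to establish the properties of $\nabla H$ first, then deduce those of $F$ through standard Legendre duality.

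\emph{Step 1: $\nabla H$ is a diffeomorphism.} The hypothesis gives $(1-\theta)I_n\preceq\nabla^2H(z)\preceq(1+\theta)I_n$ for all $z$, so $H$ is $(1-\theta)$-strongly convex and $\nabla H$ is smooth and strongly monotone:
$$
(\nabla H(z)-\nabla H(w))^\trans(z-w)\ge(1-\theta)\norm{z-w}^2 .
$$
This makes $\nabla H$ injective with $\norm{\nabla H(z)-\nabla H(w)}\ge(1-\theta)\norm{z-w}$. For surjectivity, fix $x$. The function $z\mapsto H(z)-x^\trans z$ is strongly convex and coercive, so it has a unique minimizer $z(x)$, and that minimizer satisfies $\nabla H(z(x))=x$. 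Since the Jacobian $\nabla^2H$ is invertible everywhere, the inverse function theorem makes $(\nabla H)^{-1}$ a $C^\infty$ map. Hence $\nabla H$ is a global $C^\infty$ diffeomorphism.

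\emph{Step 2: formulas for $F$.} The supremum defining $H^*(x)$ is attained exactly at $z(x)=(\nabla H)^{-1}(x)$, so
$$
F(x)=x^\trans z(x)-H(z(x)).
$$
This expresses $F$ as a composition of smooth maps, so $F\in C^\infty$. Differentiating and using $\nabla H(z(x))=x$, the terms involving $Dz(x)$ cancel, leaving $\nabla F(x)=z(x)$. Differentiating once more with the inverse function theorem gives \eqref{eq:legendre-hessian}.

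\emph{Step 3: Hessian bounds.} Inverting the spectral bounds $(1-\theta)I_n\preceq\nabla^2H\preceq(1+\theta)I_n$ gives \eqref{eq:F-hessian-bounds}. Then the eigenvalues of $\nabla^2F(x)-I_n$ lie in $[-\theta/(1+\theta),\,\theta/(1-\theta)]$, which gives \eqref{eq:F-hessian-close}. In particular, $F$ is strongly convex with modulus $1/(1+\theta)$.

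\emph{Step 4: quadratic tail.} This is the step needing the most care. Set $K:=\supp(H-Q)$. We want to show that $F(x)=\frac12\norm{x}^2$ for every $x\notin K$.

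For $x\notin K$, $H$ coincides with $Q$ on a neighborhood of $x$, so $\nabla H(x)=x$. By the injectivity from Step~1, $z(x)=x$, and then
$$
F(x)=\norm{x}^2-\tfrac12\norm{x}^2=\tfrac12\norm{x}^2 .
$$
Thus $F-Q$ vanishes on the open set $\R^n\setminus K$, which gives $\supp(F-Q)\subset K$. Since $K$ is compact and $F-Q$ is smooth, $F-Q\in C_c^\infty(\R^n)$, which is \eqref{eq:F-compact-support}.

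The subtlety is precisely that the tail of $F$ is determined through the inverse map $(\nabla H)^{-1}$, so one genuinely needs the global injectivity of $\nabla H$ from Step~1, not just local invertibility.

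\emph{Step 5: the minimizer.} From $\nabla H(0)=0$ we get $\nabla F(0)=(\nabla H)^{-1}(0)=0$. Strict convexity of $F$ (Step~3) then makes $0$ its unique minimizer. Finally, \eqref{eq:legendre-hessian} at $x=0$ gives $\nabla^2F(0)=\nabla^2H(0)^{-1}=I_n$.
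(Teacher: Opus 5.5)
Your proposal is correct and follows essentially the same route as the paper. It uses strong monotonicity plus coercivity to get bijectivity of $\nabla H$, the inverse function theorem, the explicit Legendre formula $F(x)=x^\trans(\nabla H)^{-1}(x)-H((\nabla H)^{-1}(x))$, and injectivity of $\nabla H$ to get $(\nabla H)^{-1}(\xi)=\xi$ off $\supp(H-Q)$. The only cosmetic difference is that you bound $\norm{\nabla^2F(x)-I_n}_{\op}$ through the eigenvalue interval $[-\theta/(1+\theta),\theta/(1-\theta)]$, while the paper uses the factorization $A^{-1}(I_n-A)$; both give the same constant.
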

\begin{proof}
For every $z,w\in\R^n$, the operator-norm bound in the assumptions gives
$$
  \left|w^\trans\bigl(\nabla^2H(z)-I_n\bigr)w\right|
  \le \norm{\nabla^2H(z)-I_n}_{\op}\norm{w}^2
  \le \theta\norm{w}^2.
$$
Equivalently,
$$
(1-\theta)I_n\preceq\nabla^2H(z)\preceq(1+\theta)I_n.
$$
For any $u,v\in\R^n$, integration along the line segment from $v$ to $u$ yields
$$
  \bigl(\nabla H(u)-\nabla H(v)\bigr)^\trans(u-v)
  =\int_0^1
    (u-v)^\trans\nabla^2H\bigl(v+t(u-v)\bigr)(u-v)\,dt
  \ge(1-\theta)\norm{u-v}^2.
$$
Thus $\nabla H$ is strongly monotone with modulus $1-\theta$ and is therefore
injective.

The lower Hessian bound and $\nabla H(0)=0$ also give
$$
  H(z)\ge H(0)+\frac{1-\theta}{2}\norm{z}^2.
$$
Consequently, for every fixed $x\in\R^n$,
$$
  H(z)-x^\trans z
  \ge H(0)+\frac{1-\theta}{2}\norm{z}^2-\norm{x}\norm{z}
  \longrightarrow\infty
  \qquad(\norm{z}\to\infty).
$$
The function $z\mapsto H(z)-x^\trans z$ is therefore coercive and strongly
convex, so it has a unique minimizer. Its first-order optimality condition is
$\nabla H(z)=x$, which proves that $\nabla H$ is surjective. Since
$\nabla^2H(z)$ is nonsingular for every $z$, the inverse function theorem shows
that the bijective map $\nabla H$ has a global $C^\infty$ inverse. The unique
maximizer in the definition of $H^*(x)$ is $z=(\nabla H)^{-1}(x)$. Hence
$$
F(x)=x^\trans(\nabla H)^{-1}(x)
       -H((\nabla H)^{-1}(x)).
$$
Differentiating this identity gives
\eqref{eq:legendre-hessian}, and matrix inversion gives
\eqref{eq:F-hessian-bounds}. If
$A=\nabla^2H((\nabla H)^{-1}(x))$, then
$$
\norm{A^{-1}-I_n}_{\op}
  =\norm{A^{-1}(I_n-A)}_{\op}
  \le\frac{\theta}{1-\theta},
$$
which proves \eqref{eq:F-hessian-close}.
Let $K=\supp(H-Q)$. If $\xi\notin K$, then $H-Q$ vanishes in a
neighborhood of $\xi$. Hence $\nabla H(\xi)=\xi$, and injectivity gives
$(\nabla H)^{-1}(\xi)=\xi$.
Consequently,
$$
F(\xi)=\xi^\trans\xi-H(\xi)=\frac12\norm{\xi}^2=Q(\xi)
  \qquad(\xi\notin K),
$$
which gives \eqref{eq:F-compact-support}. Finally, $\nabla H(0)=0$. Hence
$\nabla F(0)=0$ and $\nabla^2F(0)=I_n$. The minimizer is unique by strong
convexity.
\end{proof}
\begin{lemma}[Scaling invariance]\label{lem:scaling}
Let $F\in C^2(\R^n)$ generate a BFGS sequence $\{x_k\}_{k\ge0}$ with exact
line search and matrices $\{B_k\}_{k\ge0}$. For $\lambda>0$, define
$$
F_\lambda(x)=\lambda^2F(x/\lambda),
  \qquad x_k^\lambda=\lambda x_k.
$$
Then $\{x_k^\lambda\}_{k\ge0}$ is a valid exact-line-search BFGS run for
$F_\lambda$ from $(x_0^\lambda,B_0)$, using the same selected step lengths and
the same matrices $\{B_k\}_{k\ge0}$. Moreover,
\begin{align*}
  \nabla^2F_\lambda(x)&=\nabla^2F(x/\lambda),\\
  \supp(F_\lambda-Q)&=\lambda\supp(F-Q).
\end{align*}
If $x_k\to x_\star$ and $x_\star^\lambda:=\lambda x_\star$, then, for every
$p\ge1$, the sequence $\{x_k^\lambda\}_{k\ge0}$ has Q-order at least $p$ at
$x_\star^\lambda$ if and only if $\{x_k\}_{k\ge0}$ has Q-order at least $p$
at $x_\star$. Consequently, the two sequences have the same Q-order whenever
it is defined.
\end{lemma}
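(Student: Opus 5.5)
The plan is to verify the claims of Lemma~\ref{lem:scaling} by induction on $k$, checking that every ingredient of an exact-line-search BFGS step transforms covariantly under $x\mapsto\lambda x$. First I will record the derivative identities obtained from the chain rule:
$$
\nabla F_\lambda(x)=\lambda\nabla F(x/\lambda),
\qquad
\nabla^2F_\lambda(x)=\nabla^2F(x/\lambda).
$$
The support identity follows from $Q(x)=\lambda^2Q(x/\lambda)$, which gives $(F_\lambda-Q)(x)=\lambda^2(F-Q)(x/\lambda)$. Consequently $F_\lambda-Q$ vanishes at $x$ exactly when $F-Q$ vanishes at $x/\lambda$, and taking closures yields $\supp(F_\lambda-Q)=\lambda\supp(F-Q)$.

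For the iteration, write $g_k^\lambda:=\nabla F_\lambda(x_k^\lambda)=\lambda g_k$. Suppose inductively that the $\lambda$-run has reached $x_k^\lambda=\lambda x_k$ with the same matrix $B_k$. The search direction then satisfies $d_k^\lambda=-B_k^{-1}g_k^\lambda=\lambda d_k$. The line-search function is
$$
F_\lambda(x_k^\lambda+\alpha d_k^\lambda)=\lambda^2F(x_k+\alpha d_k),
$$
so its set of minimizers over $\alpha\ge0$ coincides with that of the original problem. The same selected step $\alpha_k$ is therefore admissible, and $x_{k+1}^\lambda=\lambda x_{k+1}$. For the update we have $s_k^\lambda=\lambda s_k$ and $y_k^\lambda=\lambda y_k$. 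Each rank-one term in \eqref{eq:bfgs-update} is homogeneous of degree zero in $(s,y)$: the numerators and denominators both scale by $\lambda^2$. Hence $B_{k+1}^\lambda=B_{k+1}$, and the condition $(s_k^\lambda)^\trans y_k^\lambda=\lambda^2s_k^\trans y_k>0$ is preserved, so the update remains well defined. This closes the induction.

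For the Q-order claim, note that $\norm{x_k^\lambda-x_\star^\lambda}=\lambda\norm{x_k-x_\star}$, and this vanishes exactly when $x_k=x_\star$. Substituting into the ratio gives
$$
\frac{\norm{x_{k+1}^\lambda-x_\star^\lambda}}{\norm{x_k^\lambda-x_\star^\lambda}^p}
=\lambda^{1-p}\frac{\norm{x_{k+1}-x_\star}}{\norm{x_k-x_\star}^p}.
$$
One limsup is finite if and only if the other is, so the two sets of admissible exponents coincide and the Q-orders agree. The argument is routine. The only point needing care is the well-definedness of the induction: exact line search may have several minimizers, and the lemma refers to the \emph{selected} step lengths. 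I will handle this by observing that the two argmin sets are identical, so any selection made for one run is also valid for the other.
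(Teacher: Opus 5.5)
Your proposal is correct and follows essentially the same route as the paper. Both arguments scale gradients, directions, steps and secant differences by $\lambda$, use that the line-search function is multiplied by $\lambda^2$ so the argmin sets coincide, invoke the degree-zero homogeneity of the BFGS rank-one terms, and note that the error ratio changes only by the factor $\lambda^{1-p}$. Your explicit check of the support identity via $(F_\lambda-Q)(x)=\lambda^2(F-Q)(x/\lambda)$ spells out a step that the paper simply says follows from the definition.
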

\begin{proof}
We have
$\nabla F_\lambda(x)=\lambda\nabla F(x/\lambda)$. Hence gradients, steps, and
secant differences all scale by $\lambda$. If $d_k$ is the original search
direction, then $d_k^\lambda=\lambda d_k$, and
$$
F_\lambda(x_k^\lambda+\alpha d_k^\lambda)
  =\lambda^2F(x_k+\alpha d_k).
$$
Hence each selected exact line-search minimizer for the original problem is
also an exact line-search minimizer for the scaled problem. Choosing these same
minimizers gives the stated scaled run. When $F$ is strongly convex, the
minimizer along every nonzero search direction is unique. Since both rank-one
terms in the BFGS update are homogeneous of degree zero in $(s_k,y_k)$, the
BFGS matrices are unchanged. The Hessian and support identities follow directly
from the definition. Finally,
$$
\norm{x_k^\lambda-x_\star^\lambda}
  =\lambda\norm{x_k-x_\star}.
$$
Hence scaling changes only the constant in a Q-order estimate and leaves the set
of admissible exponents unchanged.
\end{proof}
\section{Proof of the main theorem}
\label{sec:proof}
\begin{proof}[Proof of Theorem~\ref{thm:main}]
Fix $n\ge2$, $\varepsilon>0$, and $R>0$. Let $C_n$ be the constant from
Lemma~\ref{lem:interpolation}, and choose
\begin{equation*}
  0<\theta<\min\left\{\frac12,\frac{\varepsilon}{1+\varepsilon}\right\}.
\end{equation*}
Choose $\sigma\in(0,1)$ with $\sigma<\theta/C_n$, apply
Lemma~\ref{lem:alternating}, and identify the resulting two-dimensional
construction with a fixed subspace $V\subset\R^n$.
Lemma~\ref{lem:interpolation} gives
$H=Q+h$ with
$$
\sup_{z\in\R^n}\norm{\nabla^2H(z)-I_n}_{\op}<\theta,
  \qquad
  \nabla H(g_k)=x_k.
$$
Let $F:=H^*$. Lemma~\ref{lem:conjugacy} shows that $F$ is
$C^\infty$ and globally strongly convex,
$F-Q\in C_c^\infty(\R^n)$, and
$$
\sup_{x\in\R^n}\norm{\nabla^2F(x)-I_n}_{\op}
  \le\frac{\theta}{1-\theta}<\varepsilon.
$$
Moreover, $0$ is the unique minimizer and $\nabla^2F(0)=I_n$.
The interpolation identity and \eqref{eq:legendre-hessian} give
$\nabla F(x_k)=g_k$. All $x_k,g_k,s_k,y_k$ lie in $V$. The
orthogonality relation in Lemma~\ref{lem:recurrence-identities} gives
$$
g_{k+1}^\trans s_k=0\qquad(k\ge1).
$$
The same relation holds at $k=0$ by the initial choice. Indeed,
$$
x_0=(a,b,0,\ldots,0),
  \qquad
  x_1=(-\delta_1,b,0,\ldots,0).
$$
Hence $g_1^\trans(x_1-x_0)=0$. The span relation in
Lemma~\ref{lem:recurrence-identities}, with indices shifted by one, gives
$$
g_{k+2}\in\Span\{g_{k+1}-g_k\}\setminus\{0\}
  =\Span\{y_k\}\setminus\{0\}.
$$
Furthermore,
$$
s_0=x_1-x_0=(-a-\delta_1,0,\ldots,0)
  =-\left(1+\frac{\delta_1}{a}\right)g_0.
$$
Since $|\delta_1|<a/2$,
$\beta_0:=1+\delta_1/a>0$. The strictly decreasing radii imply that the
$g_k$ are distinct. Injectivity of $\nabla H$ then implies that the $x_k$ are
distinct. Hence Lemma~\ref{lem:identity-realization} applies. BFGS with
$B_0=I_n$ and exact line search generates exactly $\{x_k\}_{k\ge0}$, every $B_k$
remains positive definite, and the sequence is nonterminating.
Orthogonality of $g_k$ and $\Delta_k$ gives the exact identity
\begin{equation*}
  \norm{x_k}^2=r_k^2+\delta_k^2.
\end{equation*}
By \eqref{eq:all-order-flatness}, $|\delta_k|/r_k\to0$, and hence
\begin{equation}\label{eq:radius-transfer}
  \norm{x_k}=r_k(1+o(1)),
  \qquad
  \log\norm{x_k}=\log r_k+o(1).
\end{equation}
Together with $r_{k+1}/r_k\to0$, this implies
$$
\frac{\norm{x_{k+1}}}{\norm{x_k}}\longrightarrow0.
$$
Hence $x_k\to0$ Q-superlinearly. To make the transfer of logarithmic rates
explicit, set
$$
  R_k:=-\log r_k,
  \qquad
  L_k:=-\log\norm{x_k}.
$$
Since $r_k\to0$, we have $R_k\to\infty$, while
\eqref{eq:radius-transfer} gives $L_k=R_k+o(1)$. Both $R_{2j+1}$ and
$R_{2j+2}$ tend to infinity, so the additive $o(1)$ terms are negligible in
the following quotient. Equation~\eqref{eq:log-ratio-subsequence} therefore
implies
\begin{equation}\label{eq:primal-log-slope}
  \frac{-\log\norm{x_{2j+2}}}{-\log\norm{x_{2j+1}}}
  =\frac{L_{2j+2}}{L_{2j+1}}
  =\frac{R_{2j+2}+o(1)}{R_{2j+1}+o(1)}
  \longrightarrow1
  \qquad(j\to\infty).
\end{equation}
Suppose, to the contrary, that \eqref{eq:qorder-definition} holds for some
$p>1$ and $C>0$. Taking negative logarithms shows that, for all sufficiently
large $k$,
$$
L_{k+1}\ge pL_k-\log C.
$$
Taking $k=2j+1$ and dividing by $L_{2j+1}$ gives
$$
\liminf_{j\to\infty}\frac{L_{2j+2}}{L_{2j+1}}\ge p>1,
$$
contradicting \eqref{eq:primal-log-slope}. Hence the Q-order of
$\{x_k\}_{k\ge0}$ at the origin is one.
At this point the perturbation has compact support and the starting point is
fixed. Choose $\lambda>0$ in Lemma~\ref{lem:scaling} sufficiently small that
$$
\lambda\supp(F-Q)\cup\{\lambda x_0\}\subset B(0,R).
$$
After relabeling the scaled objective and sequence, all the preceding
properties are preserved and the required localization holds. This gives the
existence statement. The universal lower bound in
Proposition~\ref{prop:lower-bound} then shows that one is the minimum possible
Q-order and that this minimum is attained.
\end{proof}

We close with a direct iterate-level extension. Denote the right-hand side of
\eqref{eq:bfgs-update} by $B_{k+1}^{\mathrm{BFGS}}$. The
Davidon--Fletcher--Powell (DFP) update in Hessian form is
\begin{equation}\label{eq:dfp-update}
  B_{k+1}^{\mathrm{DFP}}
  =\left(I_n-\frac{y_ks_k^\trans}{y_k^\trans s_k}\right)
    B_k
    \left(I_n-\frac{s_ky_k^\trans}{s_k^\trans y_k}\right)
    +\frac{y_ky_k^\trans}{y_k^\trans s_k}.
\end{equation}
The \emph{convex Broyden class} consists of the updates
\begin{equation}\label{eq:convex-broyden-update}
  B_{k+1}^{\phi_k}
  =(1-\phi_k)B_{k+1}^{\mathrm{BFGS}}
    +\phi_k B_{k+1}^{\mathrm{DFP}},
  \qquad \phi_k\in[0,1].
\end{equation}
The parameter $\phi_k$ may vary with $k$. If $B_k\succ0$ and
$s_k^\trans y_k>0$, both endpoint updates are positive definite, so every
update in \eqref{eq:convex-broyden-update} is positive definite.

\begin{corollary}[Convex Broyden class]\label{cor:broyden}
For the objective and starting point in Theorem~\ref{thm:main}, fix any
sequence $\{\phi_k\}_{k\ge0}\subset[0,1]$. The method initialized with
$B_0=I_n$, using \eqref{eq:convex-broyden-update} and exact line search,
generates the same nonterminating point sequence as BFGS. In particular, its
iterates converge Q-superlinearly to the origin and have Q-order one.
\end{corollary}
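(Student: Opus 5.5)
The plan is to invoke Dixon's theorem \cite{Dixon1972Identical}. Under exact line search, every member of the Broyden class started from the same $B_0$ generates the same iterates. Rather than quote it as a black box, I would verify it directly for this run, using the structure supplied by Lemma~\ref{lem:identity-realization}. The claim to prove by induction on $k$ is that the Broyden-class run and the BFGS run $\{x_k\}$ agree up to index $k$, and that the Broyden matrix $B_k^{\phi}$ satisfies $B_k^{\phi}s_k=\gamma_k B_k s_k$ for some $\gamma_k>0$. Here $B_k$ is the BFGS matrix. Equivalently, $B_k^{\phi}s_k$ is a positive multiple of $-g_k$.

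If that holds, the search direction of the Broyden method is a positive multiple of $s_k$. Strong convexity of $F$ and the orthogonality $g_{k+1}^\trans s_k=0$ then force exact line search to land on $x_{k+1}$, by the same strict-convexity argument as in Lemma~\ref{lem:identity-realization}. The base case is immediate, since $B_0^\phi=B_0=I_n$.

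For the inductive step, I would not compare the two matrices directly. Instead I would rerun the argument of Lemma~\ref{lem:identity-realization} with $B_{k+1}^{\phi}$ in place of $B_{k+1}$. That argument used only four facts:
\begin{enumerate}
\item $B_{k+1}\succ0$;
\item the secant relation $B_{k+1}s_k=y_k$;
\item the block form $B_{k+1}^V\oplus I_{V^\perp}$;
\item the identities $y_k^\trans s_{k+1}=0$ and $g_{k+1}^\trans s_{k+1}<0$.
\end{enumerate}
The last identities concern only the point sequence, so they remain available. Positive definiteness holds because $B_k^\phi\succ0$ and $s_k^\trans y_k>0$, as noted after \eqref{eq:convex-broyden-update}. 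Both BFGS and DFP satisfy the secant equation, hence so does every convex combination. For the block form, by induction $B_k^{\phi}=B_k^{\phi,V}\oplus I_{V^\perp}$ and $s_k,y_k\in V$. The factors $I_n-y_ks_k^\trans/(y_k^\trans s_k)$ in \eqref{eq:dfp-update} act as the identity on $V^\perp$ and preserve $V$, so the DFP update keeps the block form, and the BFGS update does too.

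With these facts, the collinearity argument goes through unchanged. We get $s_k^\trans B_{k+1}^\phi s_{k+1}=y_k^\trans s_{k+1}=0$, so $B_{k+1}^\phi s_{k+1}$ lies in $V\cap s_k^\perp=\Span\{g_{k+1}\}$. The sign comparison $s_{k+1}^\trans B^\phi_{k+1}s_{k+1}>0>g_{k+1}^\trans s_{k+1}$ then gives $B_{k+1}^\phi s_{k+1}=-\beta^\phi_{k+1}g_{k+1}$ with $\beta^\phi_{k+1}>0$, and the search direction $d_{k+1}^\phi$ is a positive multiple of $s_{k+1}$. One subtlety needs care: $B^\phi_{k+1}s_{k+1}$ lies in $V$ only because of the block form, and the containment $s_{k+1}\in V$ comes from the point sequence.

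The main obstacle is only bookkeeping: checking that the block-form and secant properties survive for arbitrary, $k$-dependent $\phi_k$. The convex combination handles this termwise. Once the point sequences coincide, nontermination, Q-superlinear convergence, and Q-order one transfer verbatim from Theorem~\ref{thm:main}(iii).
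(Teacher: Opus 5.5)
Your proof is correct, but it takes a different route from the paper. The paper's proof is a one-line appeal to Dixon's exact-line-search equivalence \cite{Dixon1972Identical}, after noting that $s_k^\trans y_k>0$ makes every convex Broyden update well defined and positive definite.

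You instead prove the needed instance of Dixon's theorem directly, by observing that the induction in Lemma~\ref{lem:identity-realization} does not depend on which update is used. It needs only three things:
\begin{itemize}
\item a symmetric positive definite $B_{k+1}$ satisfying the secant equation $B_{k+1}s_k=y_k$;
\item that $B_{k+1}$ acts as the identity on $V^\perp$;
\item the trajectory identities $y_k^\trans s_{k+1}=0$ and $g_{k+1}^\trans s_{k+1}<0$.
\end{itemize}
Inside the two-dimensional subspace $V$, these force $B_{k+1}s_{k+1}$ to be a negative multiple of $g_{k+1}$, whatever the update. Your verifications are exactly what is required. The DFP factors act as the identity on $V^\perp$ and preserve $V$, and convex combinations inherit symmetry, the secant equation, and positive definiteness. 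This handles $k$-dependent $\phi_k$ with no extra work.

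Citing Dixon buys brevity and generality. His result covers arbitrary exact-line-search trajectories in any dimension and does not use the planar structure or the span condition \eqref{eq:realization-span}. Your route is self-contained and avoids checking the hypotheses of an external theorem. It also yields extra structure: every $B_k^{\phi}$ keeps the block form $B_k^{\phi,V}\oplus I_{V^\perp}$. Moreover, the realization criterion of Lemma~\ref{lem:identity-realization} holds verbatim for any update with the properties listed above, not only BFGS.
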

\begin{proof}
Strong convexity gives $s_k^\trans y_k>0$ on every nonzero step. Hence all
updates in \eqref{eq:convex-broyden-update} are well defined and positive
definite. Dixon's exact-line-search equivalence
\cite{Dixon1972Identical} then shows that, from the same $x_0$ and $B_0$, the
convex Broyden updates generate the same iterates as BFGS. The rate conclusions
follow from Theorem~\ref{thm:main}.
\end{proof}
\section{Conclusion}
\label{sec:conclusion}
For exact-line-search BFGS in every finite dimension $n\ge2$, the minimum
adjacent-iterate Q-order is one, and this value is attained. The example uses
the standard initialization $B_0=I_n$. The Hessian can be made uniformly as
close to the identity as desired, and the
objective differs from a quadratic only inside an arbitrarily small ball.
Thus, under the classical regularity hypotheses for exact-line-search BFGS,
the known Q-superlinear conclusion cannot be strengthened to a uniform
adjacent-iterate power order greater than one. This boundary remains sharp
under the stronger global controls imposed here. An alternating gradient
construction produces the rate: all adjacent error ratios tend to zero, while
the logarithmic order tends to one along an infinite subsequence. Compactly
supported smooth interpolation and convex conjugacy realize this discrete
construction as a nonterminating BFGS trajectory. As a direct consequence of
Dixon's exact-line-search equivalence, the same iterate-level Q-order
conclusion holds for the convex Broyden class.
\bibliographystyle{plainnat}
\bibliography{references}
\end{document}